\begin{document}

\title[Metric of the Thompson-Stein Groups]%
{The Word Problem and the Metric for the Thompson-Stein Groups}
\author{Claire Wladis}
\address{Department of Mathematics,
BMCC/CUNY, 199 Chambers St., New York, NY 10007, U.S.A.}
\email{cwladis@gmail.com}
\subjclass{20F65} 
\thanks{The author would like to thank Sean Cleary for his
support and advice during the preparation of this article.  The
author acknowledges support from the CUNY Scholar Incentive
Award and would like to thank the Techniche Universit\"{a}t
Berlin for its hospitality during the writing of this paper.}

\makeatletter
\def\maxwidth{%
  \ifdim\Gin@nat@width>\linewidth
    \linewidth
  \else
    \Gin@nat@width
  \fi
} \makeatother

\newcommand {\lft} {\mbox{${\mathcal L}$}}
\newcommand {\rt} {\mbox{${\mathcal R}$}}
\newcommand {\m} {\mbox{${\mathcal M}$}}
\newcommand {\mi} {\mbox{${\mathcal M^{\it i}}$}}
\newcommand {\mj} {\mbox{${\mathcal M^{\it j}}$}}
\newcommand {\mpp} {\mbox{${\mathcal M^{\it n-1}}$}}
\newcommand {\mone} {\mbox{${\mathcal M}^1$}}
\newcommand {\mtwo} {\mbox{${\mathcal M}^2$}}
\newcommand {\mpminusone} {\mbox{${\mathcal M}^{\it n-2}$}}

\newcommand {\lftnot} {\mbox{${\mathcal L_{\emptyset}}$}}
\newcommand {\lftl} {\mbox{${\mathcal L_{\rm L}}$}}
\newcommand {\rtnot} {\mbox{${\mathcal R_{\emptyset}}$}}
\newcommand {\rtr} {\mbox{${\mathcal R_{\rm R}}$}}
\newcommand {\rtj} {\mbox{${\mathcal R_{\it j}}$}}
\newcommand {\rtjone} {\mbox{${\mathcal R_{\it j_1}}$}}
\newcommand {\rtjtwo} {\mbox{${\mathcal R_{\it j_2}}$}}
\newcommand {\minot} {\mbox{${\mathcal M^{\it i}_{\emptyset}}$}}
\newcommand {\mionenot} {\mbox{${\mathcal M^{\it i_1}_{\emptyset}}$}}
\newcommand {\mitwonot} {\mbox{${\mathcal M^{\it i_2}_{\emptyset}}$}}
\newcommand {\mij} {\mbox{${\mathcal M^{\it i}_{\it j}}$}}
\newcommand {\mione} {\mbox{${\mathcal M}^{\it i}_1$}}
\newcommand {\mionejone} {\mbox{${\mathcal M^{\it i_1}_{\it j_1}}$}}
\newcommand {\mitwojtwo} {\mbox{${\mathcal M^{\it i_2}_{\it j_2}}$}}
\newcommand {\rti} {\mbox{${\mathcal R_{\it i}}$}}
\newcommand {\rtk} {\mbox{${\mathcal R_{\it k}}$}}
\newcommand {\rtone} {\mbox{${\mathcal R}_1$}}
\newcommand {\riplusone} {\mbox{${\mathcal R}_{i+1}$}}
\newcommand {\rtp} {\mbox{${\mathcal R_{\it n-1}}$}}
\newcommand {\rtstar} {\mbox{${\mathcal R_*}$}}
\newcommand {\mjnot} {\mbox{${\mathcal M^{\it j}_{\emptyset}}$}}
\newcommand {\mknot} {\mbox{${\mathcal M^{\it k}_{\emptyset}}$}}
\newcommand {\mpnot} {\mbox{${\mathcal M^{\it n-1}_{\emptyset}}$}}
\newcommand {\mik} {\mbox{${\mathcal M^{\it i}_{\it k}}$}}
\newcommand {\mkl} {\mbox{${\mathcal M^{\it k}_{\it l}}$}}
\newcommand {\mpq} {\mbox{${\mathcal M^{\it n-1}_{\it q}}$}}
\newcommand {\mpi} {\mbox{${\mathcal M^{\it n-1}_{\it i}}$}}
\newcommand {\mpj} {\mbox{${\mathcal M^{\it n-1}_{\it j}}$}}
\newcommand {\mistar} {\mbox{${\mathcal M^{\it i}_*}$}}
\newcommand {\mpstar} {\mbox{${\mathcal M^{\it n-1}_*}$}}

\newcommand {\gen} {\mbox{$\{x_0,x_1,\dots,,x_{n-1}\}$}}

\newcommand{\x}[1]{\mbox{$x_{#1}$}}
\newcommand{\xinv}[1]{\mbox{$x_{#1}^{-1}$}}
\newcommand{\xpm}[1]{\mbox{$x_{#1}^{\pm 1}$}}

\newcommand{\Fnm}{\ensuremath{F(n_1,...,n_k)}}
\newcommand{\Fn}{\mbox{$F(n)$}}
\newcommand{\Fm}{\ensuremath{F(m)}}
\newcommand{\F}{\mbox{$F$}}
\newcommand{\nary}{$(n_1,...,n_k)$--ary}
\newcommand{\nset}{\ensuremath{\{n_1,...,n_k\}}}
\newcommand{\Ttree}{\ensuremath{(T_-,T_+)}}
\newcommand{\Stree}{\ensuremath{(S_-,S_+)}}

\renewcommand{\v}{\ensuremath{\mathbf{v}}}

\newtheorem{thm}{Theorem}[section]
\newtheorem{prop}{Proposition}[section]
\newtheorem{lem}{Lemma}[section]
\newtheorem{cor}{Corollary}[section]
\newtheorem{rmk}{Remark}[section]
\newtheorem{defn}{Definition}[section]
\newtheorem{nota}{Notation}[section]

\begin{abstract}
We consider the Thompson-Stein group \Fnm{} where\\
$n_1,...,n_k\in\{2,3,4,...\}$, $k\in\mathbb{N}$.
We highlight several differences between the cases $k=1$ and $k>1$, including the fact that minimal tree-pair diagram representatives of elements may not be unique when $k>1$. We establish how to find minimal tree-pair diagram representatives of elements of \Fnm{}, and we prove several theorems describing the equivalence of trees and tree-pair diagrams.  We introduce a unique normal form for elements of \Fnm{} (with respect to the standard infinite generating set developed by Melanie Stein) which provides a solution to the word problem, and we give sharp upper and lower bounds on the metric with respect to the standard finite generating set, showing that in the case $k>1$, the metric is not quasi-isometric to the number of leaves or caret in the minimal tree-pair diagram, as is the case when $k=1$.
\end{abstract}

\maketitle \tableofcontents

\section{Introduction}\label{intro}
\par In this paper, we consider a collection of groups of the form \Fnm{} which are natural generalizations of Thompson's group $F$, introduced by R. Thompson in the early 1960s (see \cite{F}).  For $k=1$, the metric properties of these groups are already well-known; we begin here an investigation of the metric properties of these groups for $k>1$.  As the use of tree-pair diagram representatives has been essential in the proofs of metric properties for $k=1$, we begin by developing a theory which allows us to use tree-pair diagram representatives to represent elements of these groups for $k>1$; this reveals several key differences between the case $k=1$ and $k>1$ with respect to the minimality and equivalence of tree-pair diagrams.  Then, using our theory of tree-pair diagram representatives along with infinite and finite presentations developed for these groups using the methods of Melanie Stein \cite{F23}, we derive a unique normal form which provides a solution to the word problem.  This normal form then gives us the necessary technical framework to give sharp upper and lower bounds on the metric of these groups.  It is well-known that when $k=1$, the metric is quasi-isometric to the number of leaves in a minimal tree-pair diagram representative of an element; we show here that this is not the case when $k>1$.

Groups of the form \Fnm\ were first introduced by Brown (see
\cite{Brown}, \cite{Brown2}) and were first explored in depth
by Stein in \cite{F23}; Bieri and Strebel also explored these
groups in a set of unpublished notes in which they considered a
larger class of piecewise-linear homeomorphisms of the real
line.  Higman, Brown, Geoghegan, Brin, Squier, Guzm\'{a}n,
Bieri and Strebel have all explored generalized families of
Thompson's groups and piecewise-linear homeomorphisms of the
real line (see \cite{Higman}, \cite{Brown2}, \cite{Brown},
\cite{BrinAuto}, \cite{BrinSquier}, and appendix of \cite{F23}
for details).  We consider the groups \Fnm\ because they are,
in a sense, the most general class of groups of
piecewise-linear homeomorphisms for which tree-pair diagrams
can be used as natural representatives, and tree-pair diagrams
have proved useful in establishing many of the metric
properties of $F$ and $F(n)$.  So far little is known about the
properties of the groups \Fnm{}; in \cite {F23}, Stein explores
homological and simplicity properties of this class of groups,
showing that all of them are of type $FP_\infty$ and finitely
presented.  In this paper she also gave a technique for
determining the infinite and finite presentations for each of
these groups.

\begin{defn}[Thompson-Stein group \Fnm{}]\label{Thomgpdefn}
The Thompson-Stein group \Fnm{}, where
$n_1,...,n_k\in\{2,3,4,...\}$ and $k\in\mathbb{N}$, is the
group of piecewise-linear orientation-preserving homeomorphisms
of the closed unit interval with finitely-many breakpoints in
$\mathbb{Z}[\frac{1}{n_1\cdots n_k}]$ and slopes in the cyclic
multiplicative group $\langle n_1,...,n_k\rangle$ in each
linear piece.  Thompson's group $F$ is then $F(2)$.
\end{defn}

Thompson's groups and their generalizations as piecewise-linear homeomorphisms of the real line have been studied extensively because they have occurred naturally in several different fields and because they have interesting and complex group structures with unique properties. For example, Thompson's groups $T$ and $V$, each of which contain the group $F$, were initially of interest to mathematicians because they were the first known examples of infinite, simple, finitely-presented groups.  $F$ was the first example of a torsion-free infinite-dimensional $FP_\infty$ group.  $F$ has exponential growth, but a quadratic Dehn function, and it is suspected that $F$ may be nonamenable, even though it  has no free abelian subgroup (finding a proof of the amenability or nonamenability of $F$ has remained an important open question for decades).  For further background information about Thompson's groups $F$, $T$ and $V$, see \cite{intronotes}.

\section{Representing elements using tree-pair diagrams}
\par Tree-pair diagrams have been used extensively to represent elements of the groups \F{} and \Fn{}. This method of representation has resulted in an exact method for calculating geodesic length in the Cayley graph (see \cite {fordhamthesis}, \cite {fordhamgd}, \cite {length}), which has been used to explore a number of group properties (see for example \cite{combpropF} or \cite{BurrilloFpMetric}).  For a detailed description of how tree-pair diagrams can be used to represent elements of \Fn{}, see \cite{notMAC}.

\begin{defn}[carets, trees, tree-pair diagrams; parents and children]
An $n$--ary caret is a graph with $n+1$ vertices joined by $n$ edges: one vertex has degree $n$ (the parent) and the rest have degree 1 (the children). We say that an $n$--ary caret is of type $n$.  An \nary\ tree is a graph formed by joining any finite number of carets by identifying the child vertex of one caret with the parent vertex of another caret (where every caret has a type in \nset).  An \nary\ tree-pair diagram is an ordered pair of \nary\ trees with the same number of leaves.
 \end{defn}

\par  We can see an example of a $(2,3)$--ary tree in Figure \ref{23arytree}.

\begin{figure}
  \includegraphics[width=0.5in]{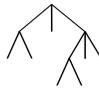}\\
  \caption{A example of a $(2,3)$--ary tree.}\label{23arytree}
\end{figure}


\par We depict \nary\ trees so that child vertices are below parent vertices; the topmost caret in a tree is the {\it root caret} (or just the {\it root}) and its parent vertex is the {\it root vertex}. For any two vertices $a$ and $b$ in an tree, vertex $a$ is the {\it descendant} of vertex $b$ iff $b$ is on the directed path from the root node to vertex $a$. Vertices with degree 1 are {\it leaves}.

\subsection{Tree-pair diagrams as representatives of elements of \Fnm}

Every \nary\ tree represents a subdivision of $[0,1]$ in the
following way: Every vertex in the tree represents a closed
subinterval of $[0,1]$. The root vertex represents $[0,1]$. An
$n$--ary caret represents the subdivision of the parent vertex
interval $[a,b]$ into $n$ equally sized, consecutive, closed
subintervals. We then recursively assign a subinterval of
$[0,1]$ to every vertex in the tree.  Then the set of
subintervals corresponding to leaf vertices gives a subdivision
of $[0,1]$ into consecutive closed subintervals whose endpoints
occur exactly in $\mathbb{Z}[\frac{1}{n_1\cdots n_k}]$.

We number the leaves in a tree beginning with zero, in increasing order from left to right; a leaf's placement in this ordering is determined by the location of the subinterval, within the closed unit interval,  which that leaf represents.  If we let $\{[a_0,a_1],[a_1,a_2],...,[a_{m-1},a_m]\}$ and $\{[b_0,b_1],[b_1,b_2],...,[b_{m-1},b_m]\}$ represent the set of intervals represented by the leaves of $T$ and $S$ respectively, in increasing order, (where we note that $a_0,b_0=0$  and $a_m,b_m=1$), then we can turn the tree-pair diagram $(T,S)$ into the piecewise-linear orientation preserving homeomorphism of $[0,1]$:
\[f(x)=\cup_{i=1}^{m}f_i(x)\forall x\in[a_{i-1},a_i]\mbox{ where }f_i(x)=\frac{b_i-b_{i-1}}{a_i-a_{i-1}}(x-a_i)+b_i\]
Because by definition a tree has only finitely-many leaves, this homeomorphism must have only finitely-many breakpoints, and we note that the slopes $\frac{b_i-b_{i-1}}{a_i-a_{i-1}}$ will always be in $\langle n_1,...,n_k\rangle$.  So every \nary\ tree-pair diagram represents and element of \Fnm.

\begin{defn}[leaf valence, $\v(l_i)$]\label{leafpathval}
For a given leaf vertex $l_i$ in a tree, we will call the path from the root vertex to that leaf the leaf path of $l_i$; we will say that a specific caret is on a given leaf path if it has an edge on that path.  For any given $j\in\{1,...,k\}$, the $n_j$--valence of a leaf $l_i\in T$ is the number of $n_j$--ary carets which are on the leaf path of $l_i$; it is denoted by $v_{n_j}(l_i)$. If we refer to just the valence of a leaf-path, or $\v(l_i)$, this refers to the vector $\langle v_{n_1}(l_i),...,v_{n_k}(l_i)\rangle$.
\end{defn}

\begin{defn}[balanced tree]
A tree is balanced if $\v(l_i)=\v(l_j)\forall l_i,l_j\in T$.  As a consequence of Theorem \ref{rootthm}, we will see that a balanced tree can always be written as an equivalent tree containing rows of uniform caret type.
\end{defn}

\begin{thm}
 A map is an element of \Fnm{} iff it can be represented by a \nary\ tree-pair diagram.
\end{thm}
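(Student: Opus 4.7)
The ``if'' direction of the biconditional is already established in the discussion immediately preceding the theorem statement, so my plan focuses on the converse. Given $f \in \Fnm{}$, the strategy is to exhibit a sufficiently fine domain subdivision whose $f$-image is itself uniform and hence comes from a balanced \nary{} tree, thereby producing both halves of the tree-pair diagram at once.

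Let $0 = p_0 < p_1 < \cdots < p_m = 1$ be the breakpoints of $f$ with $q_i = f(p_i)$, and let $s_i = n_1^{a_{i,1}} \cdots n_k^{a_{i,k}}$ be the slope of $f$ on $(p_{i-1}, p_i)$. I would first choose $M_1, \ldots, M_k$ large enough that $N := n_1^{M_1} \cdots n_k^{M_k}$ has each $p_i$ and $q_i$ as a multiple of $1/N$ and satisfies $M_j \geq a_{i,j}$ for all $i,j$. Take $T_0$ to be the balanced tree built by applying $M_1$ rows of $n_1$-ary carets, then $M_2$ rows of $n_2$-ary carets, and so on; its $N$ leaves realize the uniform subdivision $\{[j/N,(j+1)/N]\}$, and by the choice of $N$ each such leaf lies inside a single linearity interval of $f$ where the slope is some $s_{i(j)}$.

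Now set $s_{\min} = n_1^{\mu_1}\cdots n_k^{\mu_k}$ with $\mu_j = \min_i a_{i,j}$ and $K_j = s_{i(j)}/s_{\min}$; each $K_j$ is a product of the $n_j$'s with non-negative exponents. Form $T$ from $T_0$ by grafting below the $j$-th leaf a balanced subtree with $K_j$ leaves, and let $S$ be the balanced tree with $N' = N/s_{\min}$ leaves. The remaining verifications are essentially bookkeeping: (i) $T$ and $S$ have the same number of leaves, because $\sum_j K_j = s_{\min}^{-1}\sum_i s_i \cdot N(p_i - p_{i-1}) = N/s_{\min}$, the inner sum being $f(1)-f(0) = 1$; and (ii) $f$ carries each leaf of $T$, of length $1/(NK_j)$, to an interval of length $s_{i(j)}/(NK_j) = 1/N'$ whose endpoints are multiples of $1/N'$, where the latter uses that $f(j/N) \in \mathbb{Z}[1/(n_1\cdots n_k)]$ and that $N'$ clears the relevant denominators. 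So $(T,S)$ is an \nary{} tree-pair diagram inducing $f$.

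The main obstacle is that not every subdivision of $[0,1]$ with endpoints in $\mathbb{Z}[1/(n_1\cdots n_k)]$ comes from an \nary{} tree -- for instance, $\{0,1/2,5/6,1\}$ is not the leaf subdivision of any $(2,3)$-ary tree -- so one cannot naively apply $f$ to an arbitrary uniform subdivision and expect a tree on the image side. The formula $K_j = s_{i(j)}/s_{\min}$ is engineered precisely so that all image sub-intervals have the common length $1/N'$, making the range subdivision match a balanced tree; arriving at this refinement is the technical heart of the argument.
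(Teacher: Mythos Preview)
Your argument is correct and follows a genuinely different path from the paper's. The paper begins with an \emph{identical} pair of balanced trees, each with $M$ leaves (where $M$ is the least common multiple of all the domain and range interval-length denominators), and then, for each linear piece $\tfrac{1}{d_i}\to\tfrac{1}{D_i}$, grafts balanced subtrees onto \emph{both} the domain and range leaves so that the piece is realized by $m_iM_i$ matched leaves on each side. You instead refine only the domain tree: dividing every slope by $s_{\min}$ forces all image sub-intervals to share the common length $1/N'$, so the range tree is a single balanced tree with no grafting at all. The payoff of your route is a cleaner output---one of the two trees is canonical---at the price of the slope-normalization bookkeeping; the paper's route is more symmetric but produces a bulkier diagram. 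One small remark: your justification that image endpoints lie at multiples of $1/N'$ is stated loosely, but the claim is in fact automatic once you know the $N'$ image intervals each have length $1/N'$ and tile $[0,1]$ in order; alternatively, one checks directly that $N'f(j/N)=N'q_{i-1}+(s_i/s_{\min})(j-Np_{i-1})$ is an integer via $N'q_{i-1}=\sum_{l\le i-1}(s_l/s_{\min})\,N(p_l-p_{l-1})$.
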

\begin{proof}
We have already shown that every \nary\ tree-pair diagram
represents an element of \Fnm{}. Now we prove the converse.
Because the elements of \Fnm{} are continuous piecewise-linear
maps with fixed endpoints, each element can be uniquely
determined by two sets containing the same number of interval
lengths.  So suppose we have an element $x\in\Fnm$ represented
by the sets
$[\frac{n_1}{d_1},...,\frac{n_l}{d_l}]\rightarrow[\frac{N_1}{D_1},...,\frac{N_l}{D_l}]$.
First we rewrite this so that all numerators are equal to one:
for any $n_i>1$, replace the submap
$\frac{n_i}{d_i}\rightarrow\frac{N_i}{D_i}$ with $n_i$--many
copies of the submap $\frac{1}{d_i}\rightarrow\frac{1}{D_i}$,
and renumber the indices of all interval lengths to adjust for
this new mapping.  Repeat this process for any $N_i>1$.  then
we will have
$x:[\frac{1}{d_1},...,\frac{1}{d_s}]\rightarrow[\frac{1}{D_1},...,\frac{1}{D_s}]$
for some $s\ge l$.

Now we create a tree-pair diagram for $x$ in the following
way:\\ let $M=LCM(d_1,...,d_s,D_1,...,D_s)$; we note that
$M\in\mathbb{Z}\left[\frac{1}{n_1\cdots n_k}\right]$.  Then we
create a tree-pair diagram for the identity consisting of a
pair of equivalent balanced trees with $M$--many leaves, each
representing a subinterval of length $\frac{1}{M}$.  Then for
arbitrary $i$, $\exists m_i,M_i\in\langle n_1,...,n_k\rangle$
such that $\frac{1}{M}\cdot m_i=\frac{1}{n_i}$ and
$\frac{1}{M}\cdot M_i=\frac{1}{N_i}$.  We begin with the map
$\frac{1}{d_1}\rightarrow\frac{1}{D_1}$; we need to add carets
to the tree-pair diagram so that the first $m_1$ leaves in the
domain tree map to the first $M_1$ leaves in the range tree. We
add a balanced tree containing $M_1$ leaves to each of the
first $m_1$ leaves in the domain tree and likewise a balanced
tree containing $m_1$ leaves to each of the first $M_1$ leaves
in the range tree; so the resulting $m_1M_1$--many leaves in
the domain tree, which represent the interval of length
$\frac{1}{d_1}$,  are mapped to the first $m_1M_1$ leaves in
the range tree, which represent the interval of length
$\frac{1}{D_1}$.  Continuing this process for all $i$ yields a
tree-pair diagram representative for $x$.
\end{proof}

\par The tree-pair diagram representative constructed in the proof above will typically not be minimal; however, we use this construction method in our proof rather than a more optimal one for the sake of brevity.  We note that while every \nary\ tree-pair diagram represents a unique element of \Fnm{}, there will always be an infinite number of tree-pair diagram representatives for any given group element.  This leads us to consider when trees and tree-pair diagrams might be equivalent or minimal.

\subsection{Equivalence of trees: Basic results}

\begin{defn}[equivalent trees]\label{equivtrees}Two \nary  trees are
equivalent if they represent the same subdivision of the unit interval.
\end{defn}

\begin{nota}[$L(T)$, $L(T_-,T_+)$, $L(x)$]
We will use the notation $L(T)$, $L(T_-,T_+)$, and $L(x)$ to denote the number of leaves in the tree $T$, in either tree of the tree-pair diagram $(T_-,T_+)$, and in either tree of the minimal tree-pair diagram representative for $x$ respectively.
\end{nota}

\par The number of leaves in a given tree-pair diagram refers to the number of leaves in either tree.  For example, each tree-pair diagram given in Figure \ref{equivtreessimple} has 8 leaves.

\begin{thm}\label{leafpaththm}
The \nary  tree $T$ is equivalent to the \nary  tree $S$ iff $L(T)=L(S)$ and $\v(l_i)=\v(k_i)$ for all leaves $l_i$ in $T$ and $k_i$ in $S$.
\end{thm}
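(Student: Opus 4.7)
The plan is to prove the two directions separately, with the reverse direction being essentially a length-matching computation and the forward direction requiring an inductive unraveling of the tree structure.

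For $(\Leftarrow)$, the key observation is that the valence vector of a leaf completely determines the length of the subinterval it represents. By the recursive rule by which a tree encodes a subdivision, each $n_j$-caret traversed on the leaf-path of $l_i$ shrinks the current subinterval by a factor of $1/n_j$, so $l_i$ represents an interval of length $\prod_{j=1}^{k} n_j^{-v_{n_j}(l_i)}$. Given $L(T)=L(S)$ and matching valences leaf-by-leaf from left to right, the two trees therefore produce the same ordered sequence of subinterval lengths. Since the leaves of any \nary\ tree partition $[0,1]$ consecutively starting at $0$, matching length-sequences force matching breakpoints, so $T$ and $S$ represent the same subdivision and are equivalent.

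For $(\Rightarrow)$, assume $T\equiv S$; they induce the same partition of $[0,1]$, so $L(T)=L(S)$ is immediate. The real content is the valence equality, which I would prove by induction on $L(T)$. The base case $L(T)=1$ is trivial since both trees are the single-vertex tree with valence $\langle 0,\ldots,0\rangle$. For the inductive step, locate a ``cherry'' caret in $T$ whose $n_j$ children are all leaves: the $n_j$ consecutive subintervals represented by these leaves have equal length $\ell$ and jointly span a single parent subinterval of length $n_j\ell$. Because $S$ produces the same subdivision, the corresponding $n_j$ consecutive leaves of $S$ also have equal length and jointly span the same parent subinterval. I would then argue that $S$ can be locally rewritten so that those $n_j$ leaves descend from a common type-$n_j$ caret, and collapsing the matching cherries in both trees produces a smaller equivalent pair to which the inductive hypothesis applies.

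The main obstacle is exactly this local structural alignment in $S$: a single parent subinterval of length $n_j\ell$ could in principle be realized at the top by a caret of a different type followed by further refinements, so the forward direction requires showing that the common subdivision genuinely fixes the caret types along each leaf-path, not merely the interval lengths. The cleanest way I see to handle it is to process cherries in both trees in parallel, peeling off one matching caret at a time, and to use the length formula $\prod_j n_j^{-v_{n_j}(l_i)}$ together with the ordered left-to-right structure of the leaves to force the valence vectors (and not just the lengths) to coincide at each step.
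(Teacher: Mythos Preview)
Your $(\Leftarrow)$ argument is exactly the paper's: matching valences give matching leaf-interval lengths via $\mathcal{L}(I_i)=\prod_j n_j^{-v_{n_j}(l_i)}$, and a left-to-right sweep then forces all breakpoints to coincide.

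For $(\Rightarrow)$ you are working much harder than needed, and the inductive scheme has a real gap. The paper treats this direction as immediate: if $T\equiv S$ then by definition they encode the \emph{same} subdivision, so $L(T)=L(S)$ and the $i$-th leaf interval is literally the same in both trees. Reading the length formula in reverse, $\prod_j n_j^{-v_{n_j}(l_i)}=\prod_j n_j^{-v_{n_j}(k_i)}$ forces $\v(l_i)=\v(k_i)$ by unique factorization (implicitly using multiplicative independence of the $n_j$). Your cherry-collapsing induction stalls at the step ``$S$ can be locally rewritten so that those $n_j$ leaves descend from a common type-$n_j$ caret'': the natural tool for this is the later Theorem~\ref{rootthm} applied to a subtree, but invoking it requires knowing that those leaves in $S$ already have positive $n_j$-valence, which is precisely what you are trying to establish. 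You seem to sense this, since your final paragraph falls back on the length formula to force valence agreement directly; but once you use the formula that way, the entire inductive apparatus is superfluous and you have arrived at the paper's one-line argument.
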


\begin{proof}
The ``only if" statement contained in this theorem follows immediately from the definition of tree equivalence.  Now we prove the ``if" statement. We begin by considering two trees $T$ and $S$ and we suppose that $L(T)=L(S)$ and that $\v(l_i)=\v(k_i)$ for all leaves $l_i\in T$ and $k_i\in S$. If we let $\mathcal{L}(I)$ denote the length of the interval $I$, then for the intervals $I_i$ and $J_i$ represented by the leaves $l_i\in T$ and $k_i\in S$ respectively:
\[\mathcal{L}(I_i)=n_1^{-v_{n_1}(l_i)}\cdots n_k^{-v_{n_k}(l_i)}=n_1^{-v_{n_1}(k_i)}\cdots n_k^{-v_{n_k}(k_i)}=\mathcal{L}(J_i)\]
for all $i$.  Since $I_0=[0,a]$ for some $a\in[0,1]$ and $J_0=[0,b]$ for some $b\in[0,1]$, and since $\mathcal{L}(I_0)=\mathcal{L}(J_0)$, we must have $a=b$. If we have $I_n=[a_n,b_n]$ for $a_n,b_n\in[0,1]$ and $J_n=[a_n,c_n]$ for $a_n,c_n\in[0,1]$, then since $\mathcal{L}(I_n)=\mathcal{L}(J_n)$, we must have $b_n=c_n$, which, if we let $a_{n+1}=b_n=c_n$, implies that $I_{n+1}=[a_{n+1},b_{n+1}]$ for $a_{n+1},b_{n+1}\in[0,1]$ and $J_{n+1}=[a_{n+1},c_{n+1}]$ for $a_{n+1},c_{n+1}\in[0,1]$.  So by induction, we will have $I_i=J_i$ for all i.
\end{proof}

\begin{cor}\label{switchingcarets}
The trees $T$ and $S$ are equivalent iff $T$ can be obtained from $S$ by rearranging the order of carets on a given leaf path (perhaps for multiple leaf paths in the tree).
\end{cor}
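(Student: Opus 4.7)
Both implications will be derived from Theorem \ref{leafpaththm}. The backward direction is essentially immediate: rearranging carets along any leaf path by construction preserves the multiset of caret types on that path, so each $v_{n_j}(l_i)$ is unchanged, and the number of leaves (equivalently, the number of leaf paths from the root) is preserved as well. Theorem \ref{leafpaththm} then yields that the two trees are equivalent.

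For the forward direction, suppose $T$ and $S$ are equivalent. Theorem \ref{leafpaththm} gives $L(T)=L(S)$ and $\v(l_i)=\v(k_i)$ for every $i$, so the $i$-th leaf path of $T$ and the $i$-th leaf path of $S$ contain identical multisets of caret types, differing only in the order in which those carets appear. The transformation from $S$ to $T$ can then be described as a reordering of carets along each leaf path; because leaf paths share interior carets, such a reordering must be carried out coherently across several paths at once, which is the content of the parenthetical ``perhaps for multiple leaf paths in the tree.''

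To produce an explicit procedure I would induct on the number of positions at which the two trees disagree in caret type. The atomic move is the balanced swap: if a caret of type $n_a$ has all children equal to roots of identically-structured type-$n_b$ subtrees, then the $n_a$ and $n_b$ levels may be transposed (simultaneously reordering every leaf path that passes through the affected region); this is precisely the equivalence justified by Theorem \ref{rootthm}. The main obstacle is verifying that the shallowest disagreement between $T$ and $S$ is always resolvable by such a swap, which amounts to showing that matching leaf valences force the surrounding structure to be locally uniform enough for the swap to apply. I expect to handle this by comparing, at the depth of disagreement, the sub-valence vectors of the leaves descended from the differing carets and invoking uniqueness of single-caret equivalence classes; applying the swap strictly reduces the number of disagreements, and iteration brings $T$ into coincidence with $S$.
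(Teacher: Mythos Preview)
Your first two paragraphs are correct and are essentially all the paper intends here: the corollary carries no separate proof in the paper and is meant as an immediate restatement of Theorem \ref{leafpaththm}. Equivalence is the same as matching leaf valences, and matching leaf valences is the same as saying the multiset of caret types along each leaf path agrees, i.e., the paths differ only in the order of their carets. That is the whole argument.

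Your third paragraph goes beyond what the corollary is asserting. You are trying to exhibit an explicit sequence of local moves that realizes the rearrangement; that is precisely the content of Theorem \ref{equivtreesubstitution}, which the paper proves later and which in turn relies on Theorem \ref{rootthm}. Both of those results come \emph{after} Corollary \ref{switchingcarets} in the paper, so invoking them here is a forward reference, and in any case the corollary is not meant to carry that constructive burden. Your sketch is also, by your own admission, incomplete (``I expect to handle this\ldots''); the verification you flag --- that the shallowest disagreement is always resolvable by a balanced swap --- is exactly the nontrivial step that the proof of Theorem \ref{rootthm} handles via the $T^{max}$ argument. So either drop the third paragraph entirely, or replace it with a remark that an explicit realization by local swaps is provided later in Theorem \ref{equivtreesubstitution}.
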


\par We now proceed to discuss how to choose minimal tree-pair diagrams and how to compose tree-pair diagrams, which will be necessary in order to obtain criteria for determining when two tree-pair diagrams are equivalent.

\subsection{Minimal tree-pair diagrams}

\begin{defn}[equivalent tree-pair diagrams]\label{equivtpds}
Two \nary tree-pair diagrams are equivalent if they represent the same element of \Fnm{}.
\end{defn}

\begin{defn}[minimal tree-pair diagrams]\label{mintpds}
An \nary tree-pair diagram is {\it minimal} if it has the smallest number of leaves of any tree-pair diagram in the equivalence class of tree-pair diagrams representing a given element of \Fnm{}.
\end{defn}

We can reduce a tree-pair diagram by replacing it with an
equivalent tree-pair diagram with fewer leaves.  One way to
reduce a tree-pair diagram is to remove exposed caret pairs; an
exposed caret pair is a pair of carets of the same type, one in
each tree, such that all the child vertices of each caret are
leaves, and both sets of leaves have identical leaf index
numbers. Exposed caret pairs can be canceled because the
removal of an exposed caret pair does not change the underlying
map.  Cancelation of exposed caret pairs also has a natural
opposite: we can add a pair of identical carets to a tree-pair
diagram to the leaf with the same index number in each tree
without changing the underlying homeomorphism.

\par It is already well known that for $k=1$, the minimal tree-pair diagram for any element of \Fnm{} can be obtained solely through a sequence consisting of cancelation of exposed caret pairs (see \cite{notMAC} for more details on tree-pair diagram representatives of \Fnm{}).  When $k=1$, there is also always a unique minimal tree-pair diagram representative for any given element of \Fnm{}.  However, neither of these properties holds when $k>1$.

\begin{rmk}[Differences Between Minimality in \Fm\ and
\Fnm]\label{minrmk} There are some key differences between
minimal tree-pair diagram representatives in \Fm\ and \Fnm:
\begin{enumerate}
 \item There exist nonminimal \nary tree-pair diagrams which contain no exposed caret pairs (see Figure \ref{F23iden}).
 \item To obtain a minimal tree-pair diagram from a given tree-pair diagram, it may be necessary to add caret pairs to the tree-pair diagram (see Figure \ref{addtogetmin}).
 \item The minimal tree-pair diagram representative of an element of \Fnm{} may not be unique (see Figure \ref{equivtreessimple}).
\end{enumerate}
\end{rmk}

\begin{figure}[htbp]
    \centering
    \includegraphics[width=.75in]{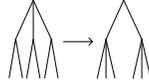}
    \caption{Nonminimal tree-pair diagram containing no exposed caret pairs.} \label{F23iden}
    \end{figure}

\begin{figure}[htbp]
\centering
\includegraphics[width=3.75in]{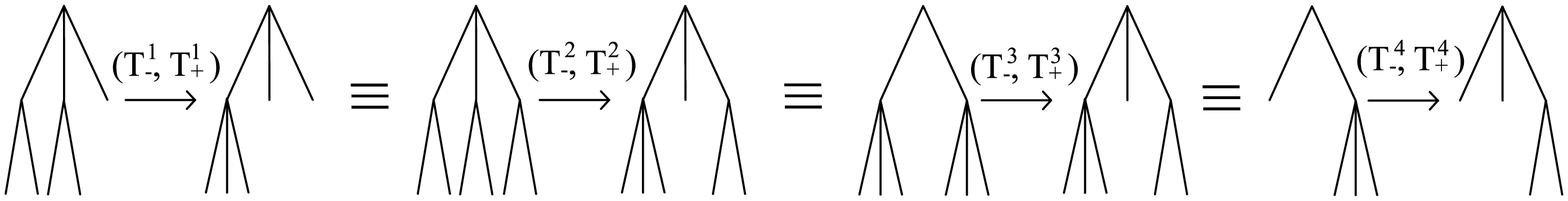}
\caption{$(T_-^1,T_+^1)$ must have carets added in order to obtain the minimal diagram $(T_-^4,T_+^4)$.} \label{addtogetmin}
\end{figure}

\begin{figure}[htbp]
    \centering
    \includegraphics[width=2.5in]{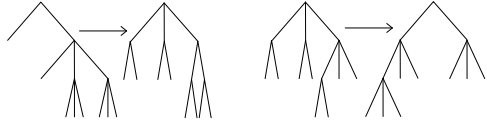}
    \caption{Two equivalent minimal tree-pair diagrams.}
    \label{equivtreessimple}
    \end{figure}

\par   We note that the two tree-pair diagrams in Figure \ref{equivtreessimple} do not even have equivalent domain trees or range trees; so there exist tree-pair diagrams containing no equivalent trees.

\subsection{Tree-pair diagram composition}

\par Composition of tree-pair diagrams is just composition of maps, where $xy$ denotes $x\circ y$.  So to find $xy$ for $x,y\in\Fnm{}$ with tree-pair diagrams $(T_-,T_+)$ and $(S_-,S_+)$ respectively, we need to make $S_+$ identical to $T_-$ (see Figure \ref{composition}). This can be accomplished by adding carets to $T_-$ and $S_+$ (and therefore to the leaves with the same index numbers in $T_+$ and $S_-$ respectively) until the valence of all leaves of both $T_-$ and $S_+$ are the same.  If we then let $T_-^*,T_+^*,S_-^*S_+^*$ denote $T_-,T_+,S_-,S_+$, respectively, after this addition of carets, then the product is $(S_-^*,T_+^*)$.

\begin{figure}[htbp]
\centering
\includegraphics[width=2.25in]{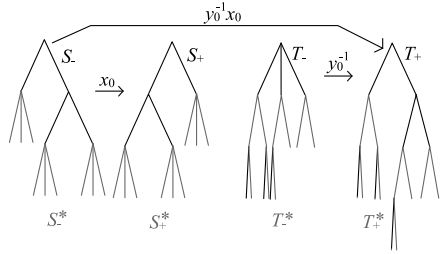}
\caption{Composition of $y_0^{-1},x_0\in F(2,3)$; black lines represent carets in the minimal tree-pair diagrams of  $y_0^{-1}$ and $x_0$ and grey lines represent carets added during composition.  } \label{composition}
\end{figure}

\par Since our operation is composition of maps, the tree-pair diagram representative for the right factor will always occur to the left during tree-pair diagram composition.  We note that it is not obvious that this process of adding carets to the domain tree of one element and the range tree of the other will necessarily terminate.  We proceed to prove this.

\begin{defn}[Common subdivision of two trees/tree-pair diagrams]
We say that a subtree $S$ of a tree $T$ is rooted iff the root
vertices of $S$ and $T$ are the same.  A common subdivision
tree of two trees $T$ and $S$ is an \nary tree $C_{T,S}$ such
that: $\exists$ trees $T^*,S^*$ with rooted subtrees $T_1,S_1$,
respectively, such that  $T^*\equiv S^*\equiv C_{T,S}$,
$T_1\equiv T$, and $S_1\equiv S$.
\end{defn}

\begin{thm}\label{commonsubdivision}
Any pair of \nary trees $T$ and $S$ has a common subdivision tree.
\end{thm}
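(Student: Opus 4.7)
The plan is to build a common subdivision explicitly as a balanced tree whose valence dominates the valences of all leaves in $T$ and $S$ coordinate-wise. This reduces the construction to Theorem \ref{leafpaththm}, which already characterizes equivalence of trees by leaf valences.

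First I would define, for each $j\in\{1,\dots,k\}$,
\[
V_j \;=\; \max\bigl\{\,v_{n_j}(l)\;:\;l \text{ a leaf of } T \text{ or } S\,\bigr\},
\]
and let $B$ denote any balanced \nary\ tree in which every leaf path consists of exactly $V_j$ carets of type $n_j$ for each $j$ (for instance, the tree obtained by attaching $V_1$ rows of $n_1$-ary carets, then $V_2$ rows of $n_2$-ary carets, and so on). Then $B$ has $\prod_j n_j^{V_j}$ leaves, each with valence $\langle V_1,\dots,V_k\rangle$.

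Next I would extend $T$ to a tree $T^*$ by attaching, below each leaf $l$ of $T$, a balanced subtree whose leaf paths consist of $V_j - v_{n_j}(l)$ carets of type $n_j$ for each $j$ (the difference is nonnegative by the definition of $V_j$, and such a balanced subtree exists because every $V_j - v_{n_j}(l)$ is a nonnegative integer). Every leaf of $T^*$ then has valence $\langle V_1,\dots,V_k\rangle$, and $T^*$ has exactly $\prod_j n_j^{V_j}$ leaves. By Theorem \ref{leafpaththm}, $T^* \equiv B$. Construct $S^*$ from $S$ in exactly the same way, so that $S^*\equiv B$ and hence $T^*\equiv S^*$. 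By construction, $T$ itself sits inside $T^*$ as a rooted subtree (take $T_1 = T$), and likewise $S_1 = S$ is a rooted subtree of $S^*$. This verifies the definition of a common subdivision tree with $C_{T,S} = B$.

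The only point that requires care is ensuring that the expansion step is legitimate, i.e.\ that we really can realize each needed subtree using only carets of types $n_1,\dots,n_k$; this is immediate since we explicitly build each inserted subtree as a sequence of rows of uniform caret type with the correct multiplicities. Beyond that the argument is essentially bookkeeping, and I do not expect a genuine obstacle: Theorem \ref{leafpaththm} does the heavy lifting by converting the geometric statement ``$T^*$ and $S^*$ subdivide $[0,1]$ the same way'' into the combinatorial one ``$T^*$ and $S^*$ have matching leaf counts and leaf valences,'' both of which hold by construction.
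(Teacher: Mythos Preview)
Your proof is correct and follows essentially the same strategy as the paper: define the coordinate-wise maximum valence vector, take the balanced tree $B$ (the paper's $R$) with that valence, and show that both $T$ and $S$ embed as rooted subtrees of trees equivalent to $B$. The only difference is in the justification of the last step: the paper invokes Theorem~\ref{rootthm} to realize $T$ as a rooted subtree of some $T^*\equiv R$, whereas you construct $T^*$ explicitly by hanging balanced subtrees off the leaves of $T$ and then invoke Theorem~\ref{leafpaththm} to check $T^*\equiv B$; your route is slightly more direct but the content is the same.
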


\begin{proof}
We will let $\v(l_i\in T)=\langle\alpha_i^1,...,\alpha_i^k\rangle$, and we let $\v(L_j\in S)=\langle\beta_j^1,...,\beta_j^k\rangle$.  Let \[M_r=\displaystyle\max\Bigl\{\alpha_i^r,\beta_i^r\Bigm|i\in\bigl\{0,...,\max\{L(T),L(S)\}\bigr\}\Bigr\}\hbox{ for }r=1,...,k\]

  Now we need only add carets to $T$ and $S$ whose type is in \nset\  until the valence of  each leaf pair in $T$ and $S$ is identical.  For the sake of simplicity, we do this by adding carets until all valences in $T$  and $S$  are $\langle M_1,...,M_k\rangle$.  As long as this process terminates, it is clear that the resulting trees will be equivalent.

  Let $R$ be the balanced tree whose leaves have valence $\langle M_1,...,M_k\rangle$; then $L(R)=n_1^{M_1}\cdots n_k^{M_k}$, which is clearly finite. Using Theorem \ref{rootthm}, we can see that there must exist a tree $T^*\equiv R$ with a rooted subtree $T_1\equiv T$ and a tree $S^*\equiv R$ with a rooted subtree $S_1\equiv S$.
\end{proof}

\par In practice it will not typically be necessary to add carets until every leaf in $T^*$ and $S^*$ has
 valence $\langle M_1,...,M_k\rangle$.  For example, in Figure \ref{composition}, this process halted before all valences were equal.

\subsection{Equivalence of trees: Further results}

\begin{thm}[Equivalent trees]\label{equivtreesubstitution}
Two $(n_1,...,n_k)$--ary trees are equivalent iff one tree can be obtained from the other through a finite sequence of subtree substitutions of the type given in Figure \ref{equivtreesFnm} (for any $p,q\in\{n_1,...,n_k\}$ such that $p\ne q$).
\end{thm}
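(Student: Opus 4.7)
The plan is to prove both directions, with the \emph{if} direction being immediate and the \emph{only if} direction proceeding by induction on the number of leaves. For the \emph{if} direction, I observe that a single substitution of the figure type replaces a two-level subtree representing a subdivision of its parent interval into $pq$ equal pieces (a $p$-caret each of whose $p$ children carries a $q$-caret) with a two-level subtree representing the identical $pq$-piece subdivision (a $q$-caret each of whose $q$ children carries a $p$-caret). Hence each substitution preserves the overall subdivision of $[0,1]$ encoded by the tree, and any finite sequence of substitutions yields an equivalent tree.

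For the \emph{only if} direction, I induct on $L(T) = L(S)$ (the leaf counts agree by Theorem \ref{leafpaththm}). The base case $L(T)=1$ is immediate. For the inductive step, suppose $T\equiv S$ and compare the two root carets. If both are of the same type $p$, the $p$ child-subtrees of $T$'s root pair off with those of $S$'s root as equivalent trees (each pair representing the same subdivision of $[(i-1)/p,i/p]$) with strictly fewer leaves; by the inductive hypothesis each pair is related by a sequence of figure substitutions, and concatenating these inside $T$ yields a transformation $T\to S$.

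If instead the root carets have different types $p\ne q$, the crux is a \emph{root-matching} reduction: a sequence of figure substitutions on $T$ produces a tree $T'$ whose root caret is $q$-ary, after which the same-type case applies to $T'$ and $S$. To produce $T'$, I use the fact that $T\equiv S$ forces the breakpoints $j/q$ coming from $S$'s root $q$-caret to appear in $T$'s subdivision; restricting to $[(i-1)/p,i/p]$, this forces each child-subtree $T_i$ of $T$'s root to represent a subdivision admitting some $q$-rooted tree representative. By an auxiliary induction on leaf count, each $T_i$ can be rewritten via substitutions to have a $q$-caret at its root. The top two levels of $T$ then form the left-hand pattern of Figure \ref{equivtreesFnm}, and one further substitution produces $T'$ with a $q$-caret at the root.

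The main obstacle is this root-matching reduction, specifically the sub-lemma that any tree whose subdivision contains a prescribed top-level $q$-fold breakpoint structure is substitution-equivalent to a tree rooted at a $q$-caret. I expect to prove this by induction parallel to the main argument, using Theorem \ref{leafpaththm} to track leaf valences at each step. Once this is established, the remainder of the proof is routine combinatorial bookkeeping.
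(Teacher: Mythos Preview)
Your overall architecture is sound and closely parallels the paper's: both arguments hinge on a \emph{root-matching} step (given $T\equiv S$ with roots of different types, transform $T$ by figure substitutions into a tree whose root has the same type as $S$'s), after which one recurses on the child subtrees. The paper isolates this step as a separate result (Theorem~\ref{rootthm} together with Corollary~\ref{treeSTsub}), proved by repeatedly locating an exposed caret inside the maximal rooted subtree $T^{max}$ containing no $q$-ary carets and swapping it downward; you propose instead an auxiliary induction on leaf count (rewrite each child subtree to have a $q$-root, then apply one substitution at the top). Both proofs of the root-matching step work, and yours is arguably the cleaner of the two. The paper then proceeds level by level rather than by your leaf-count recursion, but that is only an organizational difference.

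There is, however, a genuine gap in your \emph{justification} of the sub-lemma. You phrase it as: a tree whose subdivision contains the breakpoints $1/q,\dots,(q-1)/q$ is substitution-equivalent to a $q$-rooted tree. This is false. In $F(2,3)$, take the tree with a $3$-ary root and a single $2$-ary caret on the middle child: its leaf subdivision is $\{0,\tfrac13,\tfrac12,\tfrac23,1\}$, so it contains the point $1/2$, yet the leaves $[0,\tfrac13]$ and $[\tfrac23,1]$ have $2$-valence zero, and by Theorem~\ref{rootthm} no $2$-rooted equivalent exists. More to the point, the breakpoints $j/q$ coming from $S$'s root, when restricted to $[(i-1)/p,\,i/p]$, do not land at the points that a $q$-ary root of $T_i$ would produce, so the restriction argument does not even deliver the hypothesis you want for the $T_i$.

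The correct hypothesis is the valence condition: every leaf of $T_i$ has positive $q$-valence. This \emph{does} hold in your setting, for the right reason. By Theorem~\ref{leafpaththm}, $T\equiv S$ forces every leaf of $T$ to have the same valence vector as the corresponding leaf of $S$, hence $q$-valence at least $1$; since $T$'s root is $p$-ary with $p\ne q$, passing to a child subtree $T_i$ leaves all $q$-valences unchanged. With this corrected hypothesis your auxiliary leaf-count induction goes through exactly as you sketch, and what you obtain is precisely the content of the paper's Theorem~\ref{rootthm} and Corollary~\ref{treeSTsub}. You already mention using Theorem~\ref{leafpaththm} to track valences; make that the \emph{statement} of the sub-lemma rather than merely a proof device, and drop the breakpoint formulation entirely.
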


\begin{figure}[htbp]
\centering
\includegraphics[width=2in]{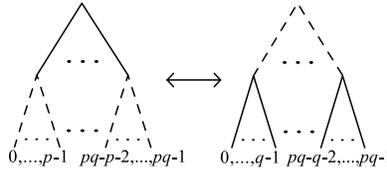}
\caption{Subtree substitutions of this form can be used to turn a tree into any equivalent tree (where $p,q\in\{n_1,...,n_k\}, p\ne q$).  Dotted line carets are $p$--ary and solid line carets are $q$--ary.} \label{equivtreesFnm}
\end{figure}

\par When $k=2$, $n_1=2$ and $n_2=3$,  the only substitution of this type is given in Figure \ref{F23iden}.

\par  Before we prove this theorem, we give another theorem, which will be used in the proof of the main theorem of this section.  This theorem will also be useful in its own right, in addition to its use in the proofs of other theorems of this paper.

\begin{thm}\label{rootthm}
An \nary tree $T$ can be written as an equivalent tree with an $m$--ary root caret (for some $m\in\{n_1,...,n_k\}$) iff $\v_m(l_i)>0$ for all $l_i\in T$. We note that this also holds for the root caret of subtrees within a larger tree.
\end{thm}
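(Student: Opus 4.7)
The plan is to prove the forward implication directly from Theorem \ref{leafpaththm}, and the reverse implication by induction on the number of carets in $T$. For ($\Rightarrow$), suppose $T$ is equivalent to some tree $T'$ whose root caret is $m$-ary. Every leaf of $T'$ has that root caret on its leaf path, so $v_m(l_i') \geq 1$ for every leaf $l_i' \in T'$. By Theorem \ref{leafpaththm} equivalent trees have matching leaf valences, and hence $v_m(l_i) \geq 1$ for every leaf of $T$.

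For ($\Leftarrow$), induct on the number of carets. The base case of a single caret is immediate: the unique nonzero valence entry is determined by the caret's type, so $v_m > 0$ forces it to be $m$-ary. For the inductive step, let the root caret of $T$ have type $p$. If $p = m$ there is nothing to prove, so assume $p \neq m$; then the root contributes nothing to the $m$-valence of any leaf. Let $T_1, \ldots, T_p$ denote the $p$ subtrees hanging off the root. Each leaf of $T_i$ still satisfies $v_m > 0$ when computed within $T_i$, so by induction each $T_i$ is equivalent to some $T_i'$ with $m$-ary root. Substituting $T_i'$ for $T_i$ yields a tree $T''$ equivalent to $T$ whose root is $p$-ary and each of whose $p$ depth-one children is itself the root of an $m$-ary caret.

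The key remaining step, which I expect to be the main obstacle to write carefully, is to swap these top two layers. Let $S_0, \ldots, S_{pm-1}$ denote the $pm$ subtrees of $T''$ at depth two, in left-to-right order, and let $T'''$ be the tree whose root is an $m$-ary caret, each of whose $m$ children is the root of a $p$-ary caret, and whose $pm$ depth-two subtrees are $S_0, \ldots, S_{pm-1}$ attached in the same left-to-right order. Then $L(T'') = L(T''')$, corresponding leaves occupy the same left-to-right positions, and for every leaf $l$ the path from the root to $l$ passes through exactly one $p$-ary caret and one $m$-ary caret before entering the corresponding $S_k$. Thus the leaf valence vectors of $T''$ and $T'''$ agree coordinate by coordinate, and Theorem \ref{leafpaththm} gives $T'' \equiv T'''$. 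Since $T \equiv T'' \equiv T'''$ and $T'''$ has an $m$-ary root, we are done.

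For the final assertion of the theorem, that the result also holds for the root caret of a subtree within a larger tree, the argument above applied locally to the subtree produces an equivalent rearrangement; substituting it back into the ambient tree alters no valence outside the subinterval represented by the subtree, so Theorem \ref{leafpaththm} again yields equivalence of the overall trees.
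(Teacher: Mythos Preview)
Your proof is correct, but it organizes the argument differently from the paper. The paper works bottom-up: it isolates the maximal rooted subtree $T^{\max}$ of $T$ on which the $m$-valence is everywhere zero, observes that any exposed caret of $T^{\max}$ (say of type $m_0$) must, by maximality, have $m_0$ many $m$-ary child carets in $T$, and performs the single two-level swap there. This decreases $N(T^{\max})$ by one, and iterating eventually forces an $m$-ary root. Your argument is top-down structural induction: you first invoke the inductive hypothesis on each of the $p$ subtrees below the root (using that $p\neq m$ means the root contributes nothing to the $m$-valence, so the hypothesis persists in each subtree), and only then do the two-level swap at the very top, appealing to Theorem~\ref{leafpaththm} for each equivalence.

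Both arguments rest on the same elementary move (the Figure~\ref{equivtreesFnm} substitution), and both support Corollary~\ref{treeSTsub}; the paper's version makes that corollary immediate because every step is visibly one such substitution, whereas in yours one must unwind the induction to see this. Your version is arguably cleaner as a pure existence proof. One small point worth making explicit in your write-up: when $p\neq m$, no child of the root can be a bare leaf (else that leaf would have $v_m=0$), so each $T_i$ really is a nonempty tree with strictly fewer carets and the induction applies.
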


\begin{proof}
The ``only if" statement of this theorem is obvious, so we proceed to prove the ``if" statement.  We choose a tree $T$ with root caret of type $n\ne m$ such that $\v_m(l_i)>0$ for all $l_i\in T$.  We suppose that no equivalent tree exists which has a root of type $m$.

Then we consider the maximal rooted subtree $T^{max}$ of $T$ such that $\v_m(l_i)=0$ for all $l_i\in T^{max}$.  Since the root of $T$ is not of type $m$, $T^{max}$ will always be nonempty.  For example, consider the (2,3)--ary tree shown in Figure \ref{2aryleafformexbefore}.  In this case we let $m=2$.  Then $T^{max}$ consists of the grey hatched-line carets only.

\begin{figure}[htbp]
\centering
\includegraphics[width=1.25in]{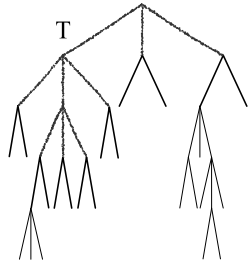}
\caption{A (2,3)--ary tree.} \label{2aryleafformexbefore}
\end{figure}

Because there must be at least one exposed caret in any tree, there must be at least one exposed caret in $T^{max}$.  Let $m_0$ denote the type of this caret (where we recall that $m_0\in\nset-\{m\}$). That same caret when viewed in $T$ will have $m_0$ many $m$--ary children (because $T^{max}$ is the maximal subtree such that $\v_m(l_i)=0$ for all $l_i\in T^{max}$. But the subtree consisting of that $m_0$--ary caret and its $m_0$ many $m$--ary children can be replaced in $T$ by the equivalent subtree consisting of a single $m$--ary caret with $m$ many $m_0$--ary children; through this substitution of equivalent subtrees, we obtain a tree $T_1$ which is equivalent to $T$.

We let $N(S)$ denote the number of carets in a tree $S$; now we
have $N\big(T_1^{max}\big)=N\big(T^{max}\big)-1$.  Now we
consider $T_1^{max}$; it also contains an exposed caret with
type $m_1$ in the set $\nset-\{m\}$, which has $m_1$ many
$m$--ary child carets in $T_1$, so we can replace this subtree
with its equivalent subtree with one $m$--ary root caret with
$m$--many $m_1$--ary children. If we continue this process so
that $T_i$ is the tree obtained by performing this kind of
substitution $i$--many times, then our inductive hypothesis is
that $N\big(T_i^{max}\big)=N\big(T^{max}\big)-i$ and $T_i\equiv
T$.  But this implies that
$N\big(T_{i+1}^{max}\big)=N\big(T^{max}\big)-(i+1)$ and
$T_{i+1}\equiv T$. So by induction,
$N\big(T_{N(T^{max})}^{max}\big)=N\big(T^{max}\big)-N\big(T^{max}\big)=0$
and $T_{N(T^{max})}\equiv T$. $N\big(T^{max}\big)$ is finite,
so $T_{N(T^{max})}$ is empty, which implies that the root caret
of $T_{N(T^{max})}$ is an $m$--ary caret, which contradicts our
initial assumption that there is no tree equivalent to $T$
which has an $m$--ary root caret.
\end{proof}

\begin{cor}\label{treeSTsub}
An \nary tree $T$ can be written as an equivalent tree $S$ with $m$--ary root caret iff $S$ can be transformed into $T$ through a finite sequence of subtree substitutions of the type given in Figure \ref{equivtreesFnm}.
\end{cor}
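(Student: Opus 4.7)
The plan is to derive this corollary directly from Theorem \ref{rootthm}, using the fact that the substitutions pictured in Figure \ref{equivtreesFnm} are both equivalence-preserving and reversible.

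For the ``if'' direction, I would check that each such substitution replaces a $p$-ary caret with $p$ many $q$-ary children by a $q$-ary caret with $q$ many $p$-ary children. Both configurations subdivide the same parent interval into $pq$ equal subintervals, so the substituted tree is equivalent to the original. Hence if $S$ has an $m$-ary root caret and can be transformed into $T$ by finitely many such substitutions, then $T\equiv S$, and $S$ itself witnesses that $T$ has an equivalent tree with $m$-ary root.

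For the ``only if'' direction, suppose $T$ has an equivalent tree with $m$-ary root caret. By Theorem \ref{rootthm} this forces $v_m(l_i)>0$ for every leaf $l_i\in T$, and the constructive proof of that theorem then supplies the required substitution sequence. At each step of that proof an exposed caret of type $m_0\ne m$ is located in the maximal rooted subtree $T^{max}$ on whose leaves $v_m=0$; in the full tree $T$ this caret's $m_0$ children are $m$-ary carets, and the proof replaces the resulting two-level pattern (an $m_0$-ary caret with $m_0$ many $m$-ary children) by the equivalent pattern (an $m$-ary caret with $m$ many $m_0$-ary children). This is precisely a substitution of the type in Figure \ref{equivtreesFnm} with $p=m_0$, $q=m$, and after finitely many such substitutions $T$ becomes an $m$-rooted tree $S$. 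Since the same figure, read with the roles of $p$ and $q$ reversed, realises the inverse of each substitution, traversing the sequence backwards converts $S$ into $T$ through substitutions of the same type.

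The main subtlety is interpretive rather than technical: the proof of Theorem \ref{rootthm} produces one specific $m$-rooted tree equivalent to $T$, but the ``$S$'' named in the corollary is not uniquely determined. The natural existential reading — that there exists an $m$-rooted $S$ which can be transformed into $T$ via the substitutions in Figure \ref{equivtreesFnm} — follows immediately from the plan above. A universal reading would additionally require that any two equivalent $m$-rooted trees be connectible by such substitutions, which is essentially the content of Theorem \ref{equivtreesubstitution} and is not needed here.
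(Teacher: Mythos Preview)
Your proposal is correct and matches the paper's intended argument: the corollary is stated without proof precisely because it is meant to be read off the constructive proof of Theorem \ref{rootthm}, which performs only substitutions of the Figure \ref{equivtreesFnm} type, together with the observation that these substitutions are equivalence-preserving and reversible. Your remark about the existential versus universal reading of $S$ is well taken; the existential reading is the one the paper needs, since Corollary \ref{treeSTsub} is invoked in the proof of Theorem \ref{equivtreesubstitution} and so cannot itself presuppose that theorem.
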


\par Now we proceed to prove the main theorem of this section.

\begin{proof}[of Theorem \ref{equivtreesubstitution}]
It is obvious that applying a sequence of subtree substitutions of the type given in Figure \ref{F23iden} to a given tree will always produce an equivalent tree.  Now we use induction and Corollary \ref{treeSTsub} to prove that any two equivalent trees can always be transformed into one another by a finite sequence  of these types of subtree substitutions.

\par Let $T$ and $S$ be a pair of equivalent \nary trees. We will transform $T$ into $S$ through a finite sequence of steps.  First we will make the root caret in $T$ identical to the root caret in $S$.  Then we will move down through $T$ level by level, and going from left to right through each level, we will change the caret type of each caret in $T$ until it is identical to that caret in that level of $S$.  In order to construct this formally, we define the following notation:  We let $D(T)$ denote the depth of the tree $T$, and we let $N_i(T)$ denote the number of carets in level $i$ of tree $T$ where we will number the levels in increasing order from top to bottom. Let $C_i(T,j)$ be the $j$th caret (counting in increasing order from left to right) in level $i$ of $T$ and let $Sub_i(T,j)$ be the subtree of $T$ whose root is $C_i(T,j)$ and which contains all carets which are descendants of $C_i(T,j)$ in $T$.

\par First we transform $T$ into to the tree $T^1$ by changing the caret type of $C_1(T,1)$ (the root caret of $T$) to the type of $C_1(S,1)$ (the root caret of $S$).  (If $T$ and $S$ already have the same root caret type, then $T^1=T$.)  By Corollary \ref{treeSTsub}, we can do this through a finite sequence of subtree substitutions of the type given in Figure \ref{F23iden}.  It is obvious that $N_1(T^1)=N_1(S)=1$ and that $C_1(T^1,j)$ and $C_1(S,j)$ have the same caret type for all possible $j$ (that is, for $j=1$).

\par Our inductive hypothesis is that $T^{n-1}\equiv S$, $N_{n-1}(T^{n-1})=N_{n-1}(S)$, and that $C_i(T^{n-1},j_i)$ and $C_i(S,j_i)$ will have the same caret type for all $i\le n-1$ and for all possible $j_i$.

\par Now we consider $Sub_n(T^{n-1},j)$ and $Sub_n(S,j)$ for $j=1,...,N_n(T^{n-1})=N_n(S)$.  For each $j$, $Sub_n(T^{n-1},j)$ can be transformed into the equivalent tree $Sub'_n(T^{n-1},j)$ which has the same root caret type as $Sub_n(S,j)$, by performing a finite sequence of substitutions of the type given in Figure \ref{F23iden} (by Corollary \ref{treeSTsub}).  Now we let $T^{n}$ be the tree equivalent to $T^{n-1}$ which is created by substituting $Sub'_n(T^{n-1},j)$ for $Sub_n(T^{n-1},j)$ in $T^{n-1}$ for each $j$.  Now $N_n(T^{n})=N_n(S)$ and $C_l(T^{n},j_l)$ and $C_l(S,j_l)$ have the same caret type for all $l\ge2$ and for all possible $j_l$.

\par So clearly $T^n\equiv S$, $N_n(T^n)=N_n(S)$, and
$C_i(T^n,j_i)$ and $C_i(S,j_i)$ have the same caret type for
all $i\le n$ and all possible $j_i$.  Therefore, by induction,
$T^{D(S)}\equiv S$ and $C_i(T^{D(S)},j_i)$ and $C_i(S,j_i)$
have the same caret type for all $i\le D(S)$ and for all
possible $j_i$, and therefore $T^{D(S)}\equiv S$ (where $D(S)$
is finite).
\end{proof}

\section{Equivalence of tree-pair diagrams}

\begin{thm}\label{equivtpdthm2}
 Any two equivalent \nary  tree-pair diagrams can be transformed into one another by a finite sequence consisting solely of the following two types of actions:
\begin{enumerate}
\item addition or cancelation of exposed caret pairs
\item subtree substitutions of the type given in Figure \ref{equivtreesFnm}
\end{enumerate}
\end{thm}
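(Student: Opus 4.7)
The plan is to reach a common refinement of the two tree-pair diagrams using only moves of type (1), reshape one diagram into the other using only moves of type (2), and then uncollapse back to the target using moves of type (1) again. Given equivalent diagrams $(T_-,T_+)$ and $(S_-,S_+)$ representing the same element $x\in\Fnm$, I would proceed in three phases: refine, reshape, cancel.

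First, I would apply Theorem \ref{commonsubdivision} to the domain trees $T_-$ and $S_-$ to obtain a common subdivision tree $C$. Inspecting the construction in that theorem, the refinement only ever appends carets onto existing leaves (to equalize valences). Performing such an addition inside the tree-pair diagram $(T_-,T_+)$ is exactly the insertion of an exposed caret pair: the caret appended to a leaf of $T_-$ is paired with a caret of the same type appended to the same-indexed leaf of $T_+$. Doing this repeatedly refines $(T_-,T_+)$ to some $(\tilde T_-,\tilde T_+)$ with $\tilde T_- \equiv C$. Running the analogous refinement on $(S_-,S_+)$ produces $(\tilde S_-,\tilde S_+)$ with $\tilde S_- \equiv C$. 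All moves in this phase are of type (1).

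Next, I would argue that the refined range trees are automatically equivalent. Since $\tilde T_- \equiv \tilde S_- \equiv C$, both diagrams induce the same subdivision $\{[a_0,a_1],\dots,[a_{M-1},a_M]\}$ of $[0,1]$, with the same left-to-right leaf labeling. The homeomorphism $x$ sends each $a_i$ to a uniquely determined point $b_i=x(a_i)$, so the $i$-th leaf of each range tree must represent the interval $[b_i,b_{i+1}]$; hence $\tilde T_+$ and $\tilde S_+$ represent the same subdivision of $[0,1]$ and are equivalent in the sense of Definition \ref{equivtrees}. Theorem \ref{equivtreesubstitution} then provides a finite sequence of subtree substitutions converting $\tilde T_-$ into $\tilde S_-$, and another converting $\tilde T_+$ into $\tilde S_+$. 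Applied to one tree of the pair at a time, each such substitution replaces a tree by an equivalent tree and therefore preserves the element represented by the diagram. After these substitutions the diagram is $(\tilde S_-,\tilde S_+)$. All moves in this phase are of type (2).

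Finally, the refinement of $(S_-,S_+)$ that yielded $(\tilde S_-,\tilde S_+)$ was built entirely from exposed caret pair additions, so I simply run that sequence in reverse as a sequence of cancelations of exposed caret pairs to land at $(S_-,S_+)$. Concatenating the three phases exhibits a finite sequence of moves of types (1) and (2) taking $(T_-,T_+)$ to $(S_-,S_+)$. The step requiring the most care is the middle one: the claim that, once the refined domain trees are equivalent, the refined range trees must be equivalent too. This relies on the fact that a tree-pair diagram representative of an element of \Fnm\ is pinned down by the element together with the common domain subdivision and the order-preserving leaf indexing, so the pairing $I_i \mapsto x(I_i)$ forces the two range subdivisions to coincide.
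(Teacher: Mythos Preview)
Your proof is correct and follows essentially the same route as the paper: refine both diagrams by adding exposed caret pairs until the domain trees share a common subdivision (via Theorem~\ref{commonsubdivision}), deduce that the refined range trees are then equivalent because both diagrams represent the same homeomorphism, invoke Theorem~\ref{equivtreesubstitution} to realize the tree equivalences by type~(2) substitutions, and finally undo the refinement on the target side by cancellation. The only cosmetic difference is that the paper first records the intermediate statement with ``substitution of equivalent subtrees'' and then appeals to Theorem~\ref{equivtreesubstitution}, whereas you fold that appeal directly into the reshape phase.
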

\begin{proof}
We begin this proof by first establishing a weaker formulation:

Two \nary  tree-pair diagrams are equivalent iff they can be transformed into one another by a finite sequence of the following two types of actions:
\begin{enumerate}
\item addition or cancelation of exposed caret pairs
\item substitution of equivalent subtrees
\end{enumerate}

The ``if" statement here is obvious, so we prove the ``only if" portion of this statement.  We know from Theorem \ref{commonsubdivision} that a common subdivision tree of $T_-$ and $S_-$ always exists which can be obtained by adding carets to $T_-$ and $S_-$; so we let $(T_-^*,T_+^*)$ and $(S_-^*,S_+^*)$ denote the tree-pair diagrams which are equivalent to $(T_-,T_+)$ and $(S_-,S_+)$ respectively, which are obtained by adding carets to $T_-$ and $S_+$ (and therefore by extension to $T_+$ and $S_-$) until the common subdivision tree $R$ of $T_-$ and $S_+$ is obtained.  So we have $R\equiv T_-^*\equiv S_-^*$, and $(T_-^*,T_+^*)\equiv(T_-,T_+)\equiv(S_-,S_+)\equiv(S_-^*,S_+^*)$, which implies that $(T_-^*,T_+^*)\equiv(S_-^*,S_+^*)$.  Therefore $T_-^*\equiv S_-^*$ implies that $T_+^*\equiv S_+^*$.  So we can obtain $(T_-,T_+)$ from $(S_-,S_+)$ by first adding the necessary carets to $(S_-,S_+)$ to obtain $(S_-^*,S_+^*)$, then substituting $T_-^*$ for $S_-^*$ and $T_+^*$ for $S_+^*$, and finally canceling the necessary carets in $(T_-^*,T_+^*)$ to obtain $(T_-,T_+)$.  Putting this result together with Theorem \ref{equivtreesubstitution}, the stated theorem immediately follows.
\end{proof}

\begin{cor}\label{twocarettypes}
If the domain and range trees in a tree-pair diagram have no
caret types in common, then the tree-pair diagram is minimal.
\end{cor}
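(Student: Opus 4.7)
The fact that $A\cap B=\emptyset$ immediately rules out any exposed caret pair in $(T_-,T_+)$: by definition such a pair requires a caret of the same type in both trees at the same leaf range, forcing that type to lie in $A\cap B$. So the direct obstruction to being minimal---having a cancellable pair---is already absent; the plan is to use Theorem~\ref{equivtpdthm2} to show that no sequence of allowed moves can manufacture a net cancellation either.

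Suppose for contradiction that there is an equivalent diagram $(T_-',T_+')$ with $L(T_-',T_+')<L(T_-,T_+)$. Theorem~\ref{equivtpdthm2} supplies a finite sequence of moves from $(T_-,T_+)$ to $(T_-',T_+')$, each either (i) addition or cancellation of an exposed caret pair of some type $n_j$ (changing the leaf count by $\pm(n_j-1)$), or (ii) a subtree substitution of the form in Figure~\ref{equivtreesFnm}, which preserves leaf count because the two patterns involved are equivalent subtrees with the same number of leaves. Writing $a_j$ and $c_j$ for the numbers of type-$n_j$ additions and cancellations along the sequence, the assumed strict decrease amounts to $\sum_j (n_j-1)(a_j-c_j)<0$.

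The central observation I would make is that both patterns in Figure~\ref{equivtreesFnm} contain carets of both types $p$ and $q$, so a subtree substitution in a single tree neither introduces a new caret type nor eliminates an existing one from that tree. Consequently, in any state reached from $(T_-,T_+)$ by substitutions alone, $T_-$ still has its caret types within $A$ and $T_+$ within $B$; the two type-sets remain disjoint, so no exposed caret pair exists and no cancellation is possible. In particular, the first cancellation in the sequence must be preceded by at least one addition.

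The remaining step, which I expect to be the main obstacle, is to set up an injective matching from the cancellations of each type $n_j$ back to prior additions of that type, yielding $c_j\le a_j$ for every $j$. The intuition is that a subtree substitution acts on only one of $T_-$ or $T_+$ at a time and so cannot by itself produce a new cross-tree matching of carets: the simultaneous presence of a type-$n_j$ caret at a given leaf range in both $T_-$ and $T_+$ can only originate in an addition, though substitutions may rearrange it within either tree. Granting this matching, $\sum_j (n_j-1)(a_j-c_j)\ge 0$, contradicting the hypothesis, and $(T_-,T_+)$ is minimal. The delicate part is verifying that the matching stays well-defined across interleaved substitutions, which can reshuffle added carets inside each tree independently and mask or unmask exposed pairs at intermediate stages; the argument must show that the total supply of cancellable cross-tree pairs of each type never exceeds the corresponding count of additions.
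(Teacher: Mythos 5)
Your strategy is essentially the paper's: reduce to Theorem~\ref{equivtpdthm2}, observe that subtree substitutions preserve the leaf count and the set of caret types occurring in each tree, and conclude that a strict decrease in leaves would require more cancellations than additions. The gap is exactly the step you defer with ``granting this matching'': the claim that every cancellation can be injectively matched to a prior addition of the same type. That claim \emph{is} the content of the corollary, and it is where the paper's own (admittedly terse) proof does its work --- it asserts that any exposed caret pair arising during the process must consist of explicitly added carets, because subtree substitutions can only move a caret type downward past carets of another type, and the higher-level carets of $T_-$ have types disjoint from those of $T_+$. Without some version of that structural claim, your accounting does not close; as written you have only shown that the first move of any reducing sequence must be an addition, not that the sequence cannot eventually achieve a net reduction.

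Moreover, the specific bookkeeping you propose --- a per-type inequality $c_j\le a_j$ obtained by tracking the identity of each added caret --- is harder than necessary and arguably not well posed: a single substitution replaces one $p$--ary caret together with $p$ many $q$--ary carets by one $q$--ary caret together with $q$ many $p$--ary carets, so the number of carets of a given type \emph{within one tree} is not preserved, and there is no canonical bijection between the carets before and after a substitution. Hence ``the added caret of type $n_j$'' has no stable identity across interleaved substitutions, which is precisely the difficulty you flag at the end. A cleaner invariant, closer to what the paper intends, is tree-level rather than caret-level: since both patterns in Figure~\ref{equivtreesFnm} contain both caret types, a substitution never introduces into a tree a type absent from it, so a type $m$ can be present in both trees of an intermediate diagram only after an addition of type $m$; one must then argue, by induction on the move sequence, that the supply of cancellable cross-tree pairs of each type never exceeds the number of additions of that type. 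Either way, this decisive lemma still has to be proved rather than assumed.
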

\begin{proof}
No subtree substitution
will produce exposed caret pairs, because no exposed
carets in one tree will be of the same type as those in the other.  However, adding carets will not allow us to produce any
exposed caret pairs either (other than the caret pairs we add explicitly) because subtree substitution will only allow
us to move caret types from higher levels to lower levels, and the types of higher-level carets are distinct from those in the other tree.
\end{proof}

\section{Normal Form and Solution to the Word Problem}\label{normalform}
\par For the duration of this article we restrict our study of the group \Fnm{} to the case in which $n_1-1|n_j-1$ for all $j\in\{1,...,k\}$.  This is because groups which do not satisfy this criteria will have a significantly different group presentation.

\subsection{Infinite Group Presentation}
\par In \cite{F23}, Stein gave a method for computing the presentation of any group of the form \Fnm{} (whether or not $n_1-1|n_j-1$  for all $j\in\{1,...,k\}$); however, the only explicit presentation given in her paper for groups of this type was for $F(2,3)$.  So we now give an explicit infinite presentation for all groups \Fnm{} such that $n_1-1|n_j-1$ for all $j\in\{1,...,k\}$.

%

\begin{thm}[Infinite Presentation of Thompson's group $F(n_1,...,n_k)$]
Thompson's group \Fnm{}, where $n_1-1|n_i-1$ for all $i\in\{1,...,k\}$ has the following infinite presentation:\\
The generators of are:
\[Z=\{(y_{i})_0, (y_{i})_1, (y_{i})_2,...,(z_{j})_0, (z_{j})_1, (z_{j})_2,...\}\]
where $i\in\{2,...,k\},j\in\{1,...,k\}$.  The $(z_{j})_0,
(z_{j})_1, (z_{j})_2,...$ are the same as the standard
generators for $F(n_i)$ (see \cite{notMAC}).   We use $Z$ to
denote this generating set from now on.  The tree-pair diagram
representative of each generator is depicted in Figure
\ref{infgenFone}.

\begin{figure}[t]
\centering
\includegraphics[width=2.8in]{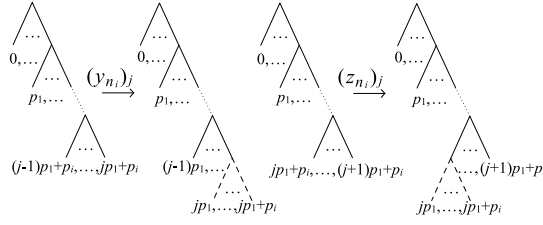}
\caption{The infinite generators of \Fnm{}, where $n_i=p_i+1$ for all $i\in\{1,...,k\}$.  Solid lines indicate $n_1$--ary carets and dashed lines indicate $n_i$--ary carets.} \label{infgenFone}
\end{figure}

The relations of this presentation are:
\begin{enumerate}
\item \label{conj} For all $r\in\{1,...,k\}$, $i<j$, and
    $\gamma_j$ a generator in\\
    $\left\{(y_{l})_j,(z_{i})_j|i\in\{1,...,k\},
    l\in\{2,...,k\}\right\}$,
    \[\gamma_j(z_{r})_i=(z_{r})_i\gamma_{j+(n_i-1)}\]
\item \label{equivtreeonright} For all $i,j\in\{1,...,k\}$, (where we will use the convention that $(y_1)_i$ is the identity),
    \begin{eqnarray*}(y_{i})_l(y_{j})_{l+1}(z_{j})_l(z_{j})_{l+n_j}(z_{j})_{l+2n_j}\cdots(z_{j})_{l+(n_i-2)n_j}=\\
    (y_{j})_l(y_{i})_{l+1}(z_{i})_l(z_{i})_{l+n_i}(z_{i})_{l+2n_i}\cdots(z_{i})_{l+(n_j-2)n_i}
    \end{eqnarray*}
\item \label{equivetreenotonright} For all $i,j\in\{1,...,k\}$,
    \begin{eqnarray*}(z_{i})_l(z_{j})_l(z_{j})_{l+n_j}(z_{j})_{l+2n_j}\cdots(z_{j})_{l+(n_i-1)n_j}=\\
    (z_{j})_l(z_{i})_l(z_{i})_{l+n_i}(z_{i})_{l+2n_i}\cdots(z_{i})_{l+(n_j-1)n_i}
    \end{eqnarray*}
\end{enumerate}
\end{thm}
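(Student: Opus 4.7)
The plan is to establish the presentation in three stages: (i) show that $Z$ generates $\Fnm$, (ii) verify that each of the three relation families holds by comparing the underlying tree-pair diagrams, and (iii) show that these relations suffice by invoking the general construction of Stein~\cite{F23}, specialized to the case $n_1-1 \mid n_j-1$. Stage (iii) will be the main obstacle; stages (i) and (ii) are essentially diagrammatic bookkeeping.

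For stage (i), I would first observe that the $(z_j)_l$, restricted to a single value of $j$, are exactly the standard generators of the subgroup $F(n_j) \le \Fnm$ whose minimal tree-pair diagrams use only $n_j$-ary carets arranged along the right spine. The $(y_i)_l$ then play the role of ``type-changing'' generators: the diagram in Figure~\ref{infgenFone} shows that $(y_i)_l$ swaps an $n_1$-ary configuration for an $n_i$-ary configuration at position $l$ along the right branch, with the leaf counts matching precisely because $n_1 - 1 \mid n_i - 1$. Using Theorem~\ref{equivtreesubstitution} together with Theorem~\ref{rootthm}, any tree-pair diagram can be rewritten so that its root carets, and then successive levels read left-to-right, match a prescribed template; the $(y_i)_l$ adjust caret types while the $(z_j)_l$ insert carets at prescribed positions, so any element can be written as a word in $Z$.

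For stage (ii), each relation is verified by drawing both sides as tree-pair diagrams and applying the composition procedure described in Section~2. Relation~(\ref{conj}) asserts that a generator supported at position $j$ commutes with one supported at an earlier position $i$, up to the index shift $j \mapsto j + (n_i - 1)$ caused by the $n_i$-ary caret inserted by $(z_r)_i$; this is a direct computation with tree-pair diagram composition. Relations~(\ref{equivtreeonright}) and~(\ref{equivetreenotonright}) encode precisely the subtree substitutions of Figure~\ref{equivtreesFnm} (with $p = n_i$, $q = n_j$) realized as elements of $\Fnm$: relation~(\ref{equivtreeonright}) covers the case where the substitution takes place along the right spine, which is why a pair of $(y)$-factors appears to change the root type, while relation~(\ref{equivetreenotonright}) covers the case where the substitution is entirely internal. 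In both cases the two sides describe the same equivalent-tree identity, and hence represent the identity element.

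For stage (iii), I would apply Stein's method from~\cite{F23}: $\Fnm$ acts on a contractible CW-complex built from nested subdivisions of $[0,1]$, and Brown's presentation lemma then produces generators from vertex stabilizers and relations from edge and $2$-cell stabilizers. The hypothesis $n_1 - 1 \mid n_j - 1$ makes the vertex stabilizers all isomorphic to $F(n_1)$-type pieces, so Stein's output simplifies to the list~$Z$ together with conjugation relations of the form~(\ref{conj}) and tree-equivalence relations of the forms~(\ref{equivtreeonright})–(\ref{equivetreenotonright}). The principal difficulty, and what I expect to spend most effort on, is matching Stein's abstract generators and $2$-cell relations with the concrete products written out here; in particular, confirming that the specific exponent patterns $l, l+n_j, l+2n_j, \dots, l+(n_i-2)n_j$ appearing in~(\ref{equivtreeonright}) (and the analogous $(n_i-1)$-term products in~(\ref{equivetreenotonright})) really are the images under Stein's identification of the standard $2$-cells corresponding to the subtree moves of Figure~\ref{equivtreesFnm}. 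Once this translation is verified, no further relations arise, and the presentation is complete.
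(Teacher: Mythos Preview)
The paper does not actually prove this theorem: it is stated without proof, with the justification that Stein~\cite{F23} ``gave a method for computing the presentation of any group of the form \Fnm{}'' and that here the author is simply making that presentation explicit in the case $n_1-1\mid n_j-1$. The paragraph following the theorem is purely expository, explaining the meaning of the three relation families in terms of tree-pair diagrams, not establishing them.

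Your proposal therefore goes further than the paper does. Your stage~(iii) is exactly the paper's implicit justification---invoke Stein's construction and translate its output into the concrete generators and relations listed here---and you are right that the real work lies in matching Stein's abstract $2$-cell relations to the specific exponent patterns in (\ref{equivtreeonright}) and (\ref{equivetreenotonright}). Stages~(i) and~(ii) are reasonable additions; the paper effectively takes (ii) for granted in its expository paragraph and never addresses (i) directly, though generation is implicit in the later normal-form theorem. So your approach is sound and strictly more detailed than what appears in the paper; just be aware that the paper itself offers no proof to compare against beyond the citation to~\cite{F23}.
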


\par The relations in \ref{conj} are the ``conjugation relations" that exist in $F$ (or more generally $F(n)$ for any $n\in\{2,3,4,...\}$).  Both the relations in \ref{equivtreeonright} and in \ref{equivetreenotonright} state that the presence of a subtree of the form of either of the two equivalent trees given in Figure \ref{equivtreesFnm} in an $(n_1,...,n_k)$--ary tree-pair diagram can be replaced with the other tree in the pair to obtain an equivalent tree-pair diagram; the relations in \ref{equivtreeonright} cover the case in which the carets on the right side of the subtree given in Figure \ref{equivtreesFnm} are on the right side of the larger tree-pair diagram, whereas the relations in \ref{equivetreenotonright} cover the case in which the carets on the right side of the subtree given in Figure \ref{equivtreesFnm} are not on the right side of the larger tree.

\par We will now introduce a normal form for elements of \Fnm{} in the standard infinite generating set $Z$ which is taken directly from the tree-pair diagram representative of the element.  This normal form will essentially give us a procedure for writing down an algebraic expression for any tree-pair diagram and for drawing a tree-pair diagram for any word in the normal form.  However, this normal form will only be as unique as the tree-pair diagram chosen to represent the element of \Fnm{}, so we begin by giving a method which will allow us to choose a unique minimal tree-pair diagram from among any set of minimal tree-pair diagram representatives.  This will produce a unique normal form that, while not quite as elegant and natural as in the case $k=1$, nonetheless provides a solution to the word problem.

\subsection{Unique Minimal Tree-pair Diagram Representatives}
\par In this section we give a set of conditions which will allow
us to chose a unique minimal tree-pair diagram representative
for any element of \Fnm.  In order to introduce criteria which
may be used to choose a unique minimal tree-pair diagram
representative, we introduce an ordering on the carets within a
tree.  Once we have a systematic way to order the carets in
tree, we can then produce an ordering on the tree-pair diagrams
in any set of minimal tree-pair diagrams, and once we have an
ordering on the set of equivalent minimal tree-pair diagrams,
then we can simply choose as the unique minimal tree-pair
diagram representative the first tree-pair diagram in the
ordering.

\begin{thm}\label{uniquealg}
We order all the carets in a tree by beginning at the top and
moving from left to right through each level as we work our way
down through to the bottom level of the tree.  Or, more
formally: we let $C_i(T,j)$ be the $j$th caret (counting in
increasing order from left to right) in level $i$ of the tree
$T$ and let $\tau_i(T,j)$ denote the type of the caret
$C_i(T,j)$. We then define the following order on the carets
within a tree: $C_i(T,j)<C_l(T,m)$ iff $i<l$, or $i=l$ and
$j<m$.  We let $\#C_i(T,j)$ denote the number of the caret
$C_i(T,j)$ in that order within the tree $T$.  Then the
following is a strict total order on the set of all minimal
tree-pair diagrams which
represent a given element of \Fnm:\\
\\
$\Ttree<\Stree$ iff one of the two conditions is met:
\begin{enumerate}
\item The first non-matching type in the domain trees is
    smaller in \Ttree\ than in \Stree: i.e. $\exists
    i_1,j_1$ such that
    $\tau_{i_1}(T_-,j_1)<\tau_{i_1}(S_-,j_1)$  and
    $\tau_i(T_-,j)=\tau_i(S_-,j)$ for all $i,j$ such that
    $\#C_i(T_-,j)<\#C_{i_1}(T_-,j_1)$ or
    $\#C_i(S_-,j)<\#C_{i_1}(S_-,j_1)$.
\item The domain trees of \Ttree\ and \Stree\ are
    identical, and the first non-matching type in the range
    trees is smaller in \Ttree\ than in \Stree: i.e.
    $\tau_i(T_-,j)=\tau_i(S_-,j)$ for all possible $i,j$
    and $\exists i_1,j_1$ such that
    $\tau_{i_1}(T_+,j_1)<\tau_{i_1}(S_+,j_1)$ and
    $\tau_i(T_+,j)=\tau_i(S_+,j)$ for all $i,j$ such that
    $\#C_i(T_+,j)<\#C_{i_1}(T_+,j_1)$ or
    $\#C_i(S_+,j)<\#C_{i_1}(S_+,j_1)$.
\end{enumerate}
\end{thm}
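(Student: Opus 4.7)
The relation $<$ defined in the theorem is, by construction, a double-layer lexicographic order: first compare the domain trees via the level-order sequence $\#C_i(T,j)$, and if these coincide, compare the range trees in the same way. The plan is to verify the three defining properties of a strict total order on the set of minimal tree-pair diagrams of a fixed $x\in\Fnm$: irreflexivity, transitivity, and connectedness (totality).

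Irreflexivity is immediate, since each clause stipulates a strict inequality between caret types (which cannot hold when a tree-pair diagram is compared to itself), and the two clauses are mutually exclusive: clause (2) requires identical domain trees while clause (1) requires that they differ. For transitivity, given $\Ttree < \Stree$ and $\Stree < (R_-,R_+)$, I would split into four subcases according to which of clauses (1) and (2) witnesses each of the two hypothesized strict inequalities. In every case, the first differing position between $T_-$ (respectively $T_+$) and $R_-$ (respectively $R_+$) can be read off by combining the two given first differences, and the strict inequality on types at that position propagates directly; the cases where clause (2) is invoked are the cleanest, since the intermediate domain trees are literally equal. This is routine lex-order bookkeeping once the four cases are written out.

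The substantive work lies in establishing connectedness. Let $\Ttree\ne\Stree$ be distinct minimal tree-pair diagrams for $x$. If $T_-\ne S_-$ as labeled trees, I walk through both trees simultaneously in the $\#C_i(\cdot,j)$ order until I locate the first position $(i_1,j_1)$ at which they disagree; clause (1) then distinguishes them by the total order on $\{n_1,\ldots,n_k\}$. If instead $T_-=S_-$, the two domain subdivisions coincide, hence so do the image subdivisions under $x$; by Theorem \ref{equivtreesubstitution}, two trees for a common subdivision differ only by the subtree substitutions of Figure \ref{equivtreesFnm}, so the distinctness hypothesis forces $T_+\ne S_+$, and the same traversal argument applied to the range trees produces the first difference required by clause (2).

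The main obstacle is a technical point hidden in the traversal argument: one must confirm that at the first position $(i_1,j_1)$ of disagreement, both trees actually have a caret there (of distinct types), rather than one having a caret and the other a leaf. Without this, the clauses in the theorem are silent on the pair and $<$ fails to be connected. I would establish this by leveraging minimality together with Theorem \ref{commonsubdivision} and the characterization of equivalent tree-pair diagrams in Theorem \ref{equivtpdthm2}: passing to a common subdivision of the two diagrams and analyzing where the substitutions of Figure \ref{equivtreesFnm} can force structural disagreement, one shows that agreement of caret types at all positions of smaller rank already forces agreement of the tree shapes up to that rank, so any first discrepancy must be a type mismatch at a shared caret position. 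With this bookkeeping settled, the remainder of the argument reduces to a clean verification of lex-order properties.
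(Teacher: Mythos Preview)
Your overall plan—verify irreflexivity, transitivity, and totality of the lexicographic order—is exactly the paper's approach, but the paper compresses the whole argument into three sentences: it simply asserts that if all caret types agree at every enumeration index $(i,j)$ then the two diagrams are identical, hence distinct minimal diagrams have a first mismatch, and ``transitivity will hold.'' You are doing the same thing with considerably more care.

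Where you diverge from the paper is in flagging the structural subtlety: because $C_i(T,j)$ enumerates the $j$th caret \emph{present} at level $i$ rather than a fixed slot among the children of level $i-1$, matching type-sequences do not a~priori force matching tree shapes, and the clauses of the theorem are silent when the first disagreement is ``caret versus leaf'' rather than a type clash. The paper's proof passes over this entirely; you are right that it deserves attention.

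That said, your proposed resolution is not yet convincing. The theorems you invoke (Theorems~\ref{commonsubdivision}, \ref{equivtreesubstitution}, \ref{equivtpdthm2}) govern equivalent trees and equivalent tree-pair diagrams, but the paper itself points out (just after Figure~\ref{equivtreessimple}) that two minimal diagrams for the same element need \emph{not} have equivalent domain trees or equivalent range trees. So ``passing to a common subdivision and analyzing where the substitutions of Figure~\ref{equivtreesFnm} force structural disagreement'' does not obviously deliver the key claim you need, namely that agreement of caret types at all earlier ranks forces agreement of the underlying tree shapes up to that rank. That claim is precisely the gap, and nothing in the cited theorems supplies it directly. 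You have correctly located the issue the paper glosses over, but the sketch you give for closing it still has work to do.
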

\begin{proof}
For any two tree-pair diagrams, if
$\tau_i(T_-,j)=\tau_i(S_-,j)$ and $\tau_i(T_+,j)=\tau_i(S_+,j)$
for all possible values of $i,j$, then every caret in the two
range trees will be identical, and every caret in the two
domain trees will be identical, and thus the two tree pair
diagrams will be identical.  So for two distinct tree-pair
diagrams, there must be at least one caret type in either the
range or domain tree which is not the same in both diagrams. So
it will always be possible to order any pair of minimal
tree-pair diagrams, and transitivity will hold.
\end{proof}

\begin{cor}\label{UNF}
A unique minimal tree-pair diagram representative can be chosen for any element $x$ of \Fnm.
\end{cor}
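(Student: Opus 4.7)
The plan is to deduce this corollary directly from Theorem \ref{uniquealg} by invoking the fact that any nonempty finite set under a strict total order has a unique least element. So I would proceed in three short steps: show the set of minimal tree-pair diagram representatives of $x$ is nonempty, show that it is finite, and then apply Theorem \ref{uniquealg} to single out its minimum.

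The nonemptiness step is immediate from Definition \ref{mintpds}, which guarantees that any $x \in \Fnm$ has at least one tree-pair diagram representative with the smallest possible number of leaves. For finiteness, let $L(x)$ denote that minimum number of leaves. Every minimal representative has exactly $L(x)$ leaves in each tree, and since the carets of an \nary\ tree each have type drawn from the finite set \nset, the number of distinct \nary\ trees with a given number of leaves is finite (one can bound it, for instance, by observing that each tree is determined by its underlying planar shape and the assignment of a type in \nset\ to each caret). Hence the set of ordered pairs of \nary\ trees with $L(x)$ leaves each is finite, and so is its subset consisting of minimal representatives of $x$.

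Finally, Theorem \ref{uniquealg} provides a strict total order on this finite nonempty set, so it has a unique least element, which we designate the unique minimal tree-pair diagram representative of $x$.

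The main potential obstacle I can foresee is the finiteness argument, since in principle one must be slightly careful: minimality fixes the number of leaves but not the number of carets or the depth in general. However, since every caret contributes at least one leaf (in fact $n_j - 1 \geq 1$ new leaves when added as a non-root caret), the total number of carets in any tree with $L(x)$ leaves is bounded, so only finitely many trees arise. Everything else is a straightforward application of the already-proved theorem.
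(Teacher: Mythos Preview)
Your proposal is correct and follows essentially the same approach as the paper: both select the least element in the strict total order of Theorem \ref{uniquealg} as the unique representative. You are simply more explicit than the paper's one-line proof in justifying why a least element exists, by verifying nonemptiness and finiteness of the set of minimal representatives.
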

\begin{proof}
We choose as our unique representative that element of the set of minimal tree-pair diagram representatives of $x$ which comes first in the ordering described in Theorem \ref{uniquealg}.
\end{proof}

\subsection{The Normal Form}

\begin{thm}[Normal Form of Elements of \Fnm{}]\label{NFgen}
For any \nary\ tree-pair diagram $(T_-,T_+)$, the element $x\in\Fnm{}$ which \Ttree\ represents can be written in the form:

\[(y_{\delta_1})_{\beta_1}\cdots(y_{\delta_N})_{\beta_N}(z_{\epsilon_1})_{\alpha_1}^{e_1}\cdots(z_{\epsilon_M})_{\alpha_M}^{e_M}(z_{\theta_P})_{\Gamma_P}^{-d_P}\cdots(z_{\theta_1})_{\Gamma_1}^{-d_1}(y_{\theta_Q})_{\lambda_Q}^{-1}\cdots(y_{\theta_1})_{\lambda_1}^{-1}\]

where $\alpha_1\le\alpha_2\le\cdots\le\alpha_M\ne\Gamma_P\ge\Gamma_{P-1}\ge\cdots\ge\Gamma_1$, $\beta_i\le\beta_{i+1}-(n_{\delta_i}-1)\forall i\in\{1,...,N-1\}$, and $\lambda_{i+1}\le\lambda_{i}-(n_{\theta_{i+1}}-1)\forall i\in\{1,...,Q-1\}$,
where the conditions imposed on each of the $\alpha_i,\beta_i,\delta_i,\epsilon_i,\lambda_i,\theta_i,\phi_i,\Gamma_i,d_i,e_i$ depends only on the structure of the tree-pair diagram itself.  (We will describe these conditions in greater detail in Theorem \ref{NF}.)
\end{thm}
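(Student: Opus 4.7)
The plan is to proceed by induction on the number of carets in $(T_-,T_+)$, constructing the normal form expression directly from the tree-pair diagram. The base case is the identity, represented by the trivial tree-pair and the empty product. For the inductive step, I read the carets off $T_-$ and $T_+$ in a prescribed order, associating each caret with a specific generator in $Z$.

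First I would encode the domain tree $T_-$. Reading the right spine of $T_-$ from the root downward, each $n_{\delta_i}$-ary caret on the spine contributes a generator $(y_{\delta_i})_{\beta_i}$, producing the block $(y_{\delta_1})_{\beta_1}\cdots(y_{\delta_N})_{\beta_N}$ at the left of the expression; when $\delta_i=1$ this generator is by convention the identity, so only non-$n_1$-ary spine carets actually contribute. The index $\beta_i$ records the leaf position directly to the left of the spine descent at that depth; since an $n_{\delta_i}$-ary caret inserts $n_{\delta_i}-1$ leaves to the left of the continuing spine, the constraint $\beta_i\le\beta_{i+1}-(n_{\delta_i}-1)$ arises naturally. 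Then, reading the off-spine carets of $T_-$ in leaf-order (left to right), each contributes a factor $(z_{\epsilon_j})_{\alpha_j}^{e_j}$, so that $\alpha_1\le\alpha_2\le\cdots\le\alpha_M$ is forced.

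Next I would encode the range tree $T_+$ symmetrically, using inverse generators read in reverse order. The off-spine carets of $T_+$ contribute the block of inverse $z$-generators with $\Gamma_P\ge\Gamma_{P-1}\ge\cdots\ge\Gamma_1$, and the spine carets contribute the inverse $y$-generators $(y_{\theta_i})_{\lambda_i}^{-1}$ at the right, with the analogous spine-shift condition $\lambda_{i+1}\le\lambda_i-(n_{\theta_{i+1}}-1)$. To verify that the resulting product represents $x$, I would check that composing these elementary generators (using the tree-pair composition procedure of Section~2.4, together with Theorem~\ref{equivtpdthm2}) yields exactly $(T_-,T_+)$: each generator's diagram contributes precisely one caret in the prescribed position to the evolving composite, and the ordering ensures no caret is overwritten or obstructs a later one.

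The main obstacle is bookkeeping: making precise the correspondence between caret positions in $T_-,T_+$ and the subscripts $\alpha_i,\beta_i,\Gamma_i,\lambda_i$, especially because inserting an $n$-ary caret at a leaf shifts all higher-indexed leaves by $n-1$. This shift is exactly what produces the inequalities on the spine indices and the weak monotonicity of the off-spine indices; tracking it carefully throughout the inductive construction is the delicate part. Since this theorem only asserts the \emph{existence} of a word of the stated form, I would defer the detailed equality-type constraints relating subscripts to individual caret positions (which Theorem~\ref{NF} supplies) and focus here on confirming that the inductive procedure terminates and outputs an expression of the required shape.
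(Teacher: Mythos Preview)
Your overall strategy matches the paper's: factor the element as a positive word times the inverse of a positive word, encode the right-spine carets by $y$-generators and the off-spine carets by $z$-generators, and read everything off the tree-pair diagram. The paper makes this explicit by writing $(T_-,T_+)=(*,T_+)\cdot(*,T_-)^{-1}$ and then, for a positive word $(*,T)$, further factoring as $(*,R(T))\cdot(*,R^c(T))$; it proves the $y$-block and $z$-block separately by induction rather than doing a single induction on the total number of carets as you propose. Either induction scheme would work.

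However, you have the two trees interchanged. In this paper's conventions, composition is $xy=x\circ y$ and a positive tree-pair diagram is one whose \emph{domain} tree is the trivial right spine of $n_1$-ary carets; consequently the positive block $(y_{\delta_1})_{\beta_1}\cdots(y_{\delta_N})_{\beta_N}(z_{\epsilon_1})_{\alpha_1}^{e_1}\cdots(z_{\epsilon_M})_{\alpha_M}^{e_M}$ encodes the \emph{range} tree $T_+$, and the inverse block encodes the \emph{domain} tree $T_-$ (see the statement of Theorem~\ref{NF}, which reads $\beta_i,\alpha_i$ from $R(T_+)$ and the off-spine carets of $T_+$). With your assignment---positive generators from $T_-$, inverse generators from $T_+$---the product you build is $(*,T_-)\cdot(*,T_+)^{-1}=(T_+,T_-)=x^{-1}$, not $x$. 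Once you swap the roles of $T_-$ and $T_+$ in your procedure, the rest of your outline goes through and is essentially the paper's argument.
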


\par Any algebraic expression in this form immediately gives us
all the information necessary to write down a tree-pair diagram
representative for that word, and any \nary\ tree-pair diagram
will immediately give us enough information to write down a
word in this form which represented by the tree-pair diagram.
We will let $NF(x)$ denote this normal form for the word $x$,
derived using the unique minimal tree-pair diagram for $x$.

\par The remainder of this section will be devoted to proving our
normal form, and just as in the case \Fnm{} for $k=1$ in
\cite{BurrilloFpMetric}, the fact that any word can be factored
uniquely into a product of a positive word and its inverse will
be central to this proof. First we introduce the idea of
positive words.  A positive tree-pair diagram is a tree-pair
diagram whose domain tree contains only carets of type $n_1$
which are on the right side of the tree. A positive word in
\Fnm{} is a word whose normal form consists entirely of
generators with powers which are positive. Positive words are
precisely the words with positive tree-pair diagram
representatives.  (We note that there may be words which
consist entirely of positive powers of generators which are not
in the normal form, and that these words may not have positive
tree-pair diagram representatives.
 The word $(y_2)_0^2$ is a simple example of a word of this type.)
 In a positive tree-pair diagram, only the
range tree is nontrivial - as a result, for the duration of
this section, we will use a kind of shorthand notation, in
which we write $(*,T)$ to denote the positive tree-pair diagram
whose range tree is $T$.

Every \nary\ tree-pair diagram can be uniquely factored into a
product of two positive tree-pair diagrams.  If we have a
tree-pair diagram $(T_-,T_+)$ and we let $(S_-,S_+)$,
$(R_-,R_+)$ denote positive tree-pair diagrams where $S_+=T_-$
and $R_+=T_+$, then $(T_-,T_+)=(R_-,R_+)(S_-,S_+)^{-1}$.  From
this we can conclude that any element of \Fnm{} can be written
as the product of a positive word and the inverse of a positive
word.

So if a normal form exists for positive elements of \Fnm{}, then a
normal form exists for any element $w$ of \Fnm{}.  This normal
form can be obtained by writing
\[NF(w)=NF(w_+)NF(w_-^{-1})\]
where $w_+$ and $w_-$ are both positive words in \Fnm{} such
that $w_+w_-^{-1}=w$.

\subsection{The Normal Form of Positive Words}
For the duration of this section, we shall restrict ourselves
to describing the normal form for positive words only.  The
following definition will be central to this construction:

\begin{defn}[leaf exponent matrix]\label{matrix}
We will call a caret a right caret if it has an edge on the
right side of the tree.  Let $\eta_i$ be the minimal path from
the leaf $l_i\in T$ to the root vertex of $T$.  Now we consider
the subpath $\lambda_i$ of $\eta_i$ which begins at $l_i$ and
moves along $\eta_i$ only as far as possible by moving along
edges which are the leftmost edge of some non-right caret in
the tree.  If no movement is possible along $\eta_i$ which
satisfies this criteria, then we say that $\lambda_i$ is empty.

Now we break $\lambda_i$ down into subpaths
$\lambda_{i,1},...,\lambda_{i,r}$, which are chosen so that
each subpath is the maximal subpath containing one caret type.
We use the convention that $\lambda_{i,1}$ denotes the subpath
closest to the root, and $\lambda_{i,j}$ denotes the subpath
adjacent to $\lambda_{i,j-1}$, where $\lambda_{i,j}$ is closer
to the leaf $l_i$ than $\lambda_{i,j-1}$.  Then the leaf
exponent matrix for $l_i$ is
\[E_i=\left(\begin{array}{ccc}
                    \epsilon_{i,1} & \cdots & \epsilon_{i,m}\\
                    d_{i,1} & \cdots & d_{i,m}
                    \end{array}\right)\]
where $\epsilon_{i,j}\in\nset$ is the type of the carets on the
path $\lambda_{i,j}$ and $d_{i,j}$ is the number of carets on
the path $\lambda_{i,j}$.
\end{defn}

\par For example, some of the leaf exponent matrices for $T_-$ in Figure \ref{NFex} are: \\
$E_0=\left(\begin{array}{cc} 3 & 2\\ 1 & 1 \end{array}\right)$, $E_1=(\hbox{ })$, and $E_2=\left(\begin{array}{cc} 2 & 3\\ 1 & 2 \end{array}\right)$.

\begin{thm}[Normal Form of Positive Words]\label{NF}
We let $R(T)$ denote the subtree of the tree $T$ consisting
entirely of carets with an edge on the right side of the tree.
For any positive \nary\ tree-pair diagram $(T_-,T_+)$, the
positive word $w\in\Fnm{}$ which \Ttree\ represents can be
written in the form:

\[(y_{\delta_1})_{\beta_1}\cdots(y_{\delta_N})_{\beta_N}(z_{\epsilon_1})_{\alpha_1}^{e_1}\cdots(z_{\epsilon_M})_{\alpha_M}^{e_M}\]

where $\beta_{i+1}\ge(n_{\delta_i}-1)+\beta_i$,
$\alpha_{i+1}\ge\alpha_i$, and $\epsilon_i\ne\epsilon_{i+1}$,
$\forall i$.  Here is the procedure:
\begin{enumerate}
\item \label{ycond}
    $(y_{\delta_1})_{\beta_1}\cdots(y_{\delta_N})_{\beta_N}$:
    Consider $R(T_+)$; let $N$ be the number of
    non-$n_1$--ary carets in $R(T_+)$, and let
    $\wedge_{\beta_i}$ denote the non-$n_i$--ary caret with
    leftmost leaf index number $(n_1-1)\beta_i$ in $R(T_+)$
    with type $n_{\delta_i}$.
\item \label{zcond}
    $(z_{\epsilon_1})_{\alpha_1}^{e_1}\cdots(z_{\epsilon_M})_{\alpha_M}^{e_M}$:
    Consider all of the carets in $T_+$ which are not in
    $R(T_+)$; let $M$ be the number of carets in $T_+$
    which have non-empty leaf exponent matrices, let
    $l_{\alpha_i}$ denote the leaf with nonempty leaf
    exponent matrix $\left(\begin{array}{ccc}
                    \epsilon_{i} & \cdots & \epsilon_{i+m}\\
                    e_{i} & \cdots & e_{i+m}
                    \end{array}\right)$ and index number $\alpha_i$.
                    Then $\alpha_i=\alpha_{i+1}$ iff $l_{\alpha_i}$ has a leaf exponent matrix of the form $\left(\begin{array}{cccc}
                    \cdots & \epsilon_{i} & \epsilon_{i+1}& \cdots \\
                    \cdots & e_{i} & e_{i+1} & \cdots
                    \end{array}\right)$.
\end{enumerate}
When \Ttree\ is the unique minimal tree-pair diagram
representative of $w$, then this algebraic expression is the
unique normal form of $w$.
\end{thm}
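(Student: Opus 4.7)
The plan is to prove the theorem by induction on the total number of carets in $T_+$, building the positive word directly from the structure of $T_+$ and verifying at each stage that the composition of generators produces the correct tree-pair diagram. The factorization of the word into a $y$-part (reading $R(T_+)$) and a $z$-part (reading the non-right subtrees of $T_+$) is natural once one observes that each generator $(y_{\delta})_{\beta}$ affects only the right spine of the range tree, while each $(z_{\epsilon})_{\alpha}^{e}$ attaches a sequence of carets to the leaf of index $\alpha$ away from the right spine.

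First I would establish the base case: if $T_+$ consists solely of $n_1$-ary right carets then $(*,T_+)$ represents the identity and the procedure produces the empty word. For the inductive step I would proceed in two phases. In the first phase, working from right to left through the $y$-generators, I would peel off one right caret at a time from $R(T_+)$ by multiplying on the right by $(y_{\delta_N})_{\beta_N}^{-1}$ (or by $(z_{n_1})_{\beta}^{-1}$ for $n_1$-ary right carets suppressed by the convention that $(y_1)_i$ is the identity); using the composition procedure of Section 2.4, each such multiplication removes exactly one right caret from $R(T_+)$. The constraint $\beta_{i+1}\ge (n_{\delta_i}-1)+\beta_i$ then follows because an $n_{\delta_i}$-ary right caret at leaf index $(n_1-1)\beta_i$ occupies $n_{\delta_i}-1$ non-rightmost leaves before the next right caret can appear. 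In the second phase, once $R(T_+)$ has been reduced to the trivial right spine, I would show that the remaining carets of $T_+$, partitioned by the leaf of the rightmost-descendant in the original tree, correspond precisely to one block $(z_{\epsilon_i})_{\alpha_i}^{e_i}\cdots(z_{\epsilon_{i+m}})_{\alpha_i}^{e_{i+m}}$ per leaf, where the alternation of caret types in the leaf exponent matrix forces $\epsilon_j\ne\epsilon_{j+1}$, and the left-to-right ordering of the leaves in $T_+$ gives $\alpha_i\le\alpha_{i+1}$.

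For the correctness of this identification, I would appeal to Theorem \ref{rootthm} and Theorem \ref{equivtreesubstitution}: the definition of the leaf exponent matrix records exactly the maximal rooted runs of a single caret type that one is forced to choose as the root of each non-right subtree of $T_+$ (moving along leftmost edges), so this data determines the subtree at each leaf up to equivalence. Composition of $(*,T_+')$ with a generator $(z_{\epsilon})_{\alpha}$ grafts a fresh $n_\epsilon$-ary caret onto leaf $\alpha$ of $T_+'$, and iterating this produces exactly the rooted chain recorded by the leaf exponent matrix. Thus the procedure, read from the outside in, inverts the construction of $T_+$ generator-by-generator.

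The main obstacle is ensuring that the interaction between the $y$-part and the $z$-part is clean, i.e. that after stripping the right spine with the $y$-generators one still has a well-defined positive tree-pair diagram whose range is exactly the non-right-spine subtrees of $T_+$ grafted onto a standard all-$n_1$ right spine. Conjugation relation \ref{conj} is what allows the $y$'s to be pushed past the $z$'s without altering the non-right subtree structure, and I would use it to certify that the constructed word does not depend on the order in which the two phases are carried out. Uniqueness then follows in two steps: the procedure assigns a unique word to any tree-pair diagram $(*,T_+)$ by construction, and since by Corollary \ref{UNF} every element $w$ of \Fnm\ has a unique minimal tree-pair diagram representative, the induced word $NF(w)$ is uniquely determined. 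Any other word in the stated form would yield, via the inverse construction, a different tree-pair diagram, contradicting this uniqueness.
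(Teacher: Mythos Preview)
Your overall architecture—split into a $y$-part coming from $R(T_+)$ and a $z$-part coming from the interior carets, then argue inductively—matches the paper's. The paper, however, executes the split differently and thereby avoids a genuine difficulty in your plan. It first proves the factorisation at the tree level: defining $R^c(T)$ to be $T$ with each non-$n_1$-ary right caret replaced by a string of $\frac{n_j-1}{n_1-1}$ right $n_1$-ary carets, one checks by a single tree-pair composition that $(*,T_+)=\bigl(*,R(T_+)\bigr)\cdot\bigl(*,R^c(T_+)\bigr)$. With that factorisation in hand, the two factors are handled independently by \emph{building up}: one shows that right-multiplying a positive word by one more $(y_j)_i$ (respectively $(z_j)_i$) appends exactly one right caret (respectively one interior caret at leaf $i$) to the range tree. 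No peeling and no commutation of $y$'s past $z$'s is needed.

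The concrete problem with your scheme is the direction of the peeling. You claim that multiplying $w=(*,T_+)$ on the right by $(y_{\delta_N})_{\beta_N}^{-1}$ ``removes exactly one right caret from $R(T_+)$.'' It does not. In the paper's composition convention, $w\cdot(y_{\delta_N})_{\beta_N}^{-1}$ matches the $n_1$-right-spine domain of $w$ against the $n_1$-right-spine range of $(y_{\delta_N})_{\beta_N}^{-1}$; the resulting tree-pair has domain tree containing an $n_{\delta_N}$-ary caret and range tree still equal to (an expansion of) $T_+$. The product is no longer positive, and nothing has been removed from $T_+$. Repairing this by pushing the $y$ rightward through the $z$'s via relation~(\ref{conj}) is not straightforward either: that relation reads $\gamma_j(z_r)_i=(z_r)_i\gamma_{j+(n_r-1)}$ for $i<j$, so as $(y_{\delta_N})_{\beta_N}$ moves right its subscript strictly increases, and when it finally reaches the $(y_{\delta_N})_{\beta_N}^{-1}$ you appended it will not cancel. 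Moreover the hypothesis $i<j$ (here $\alpha_\ell<\beta_N$) need not hold for all the $z$'s in $NF(w)$. The clean fix is exactly what the paper does: establish the $y$--$z$ factorisation once at the level of tree-pair diagrams, and then run the induction forward (right-multiplying by one more generator) rather than backward.
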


\par The definition of the normal form given in this theorem
is somewhat technical - for a concrete example, see the example
given in subsection \ref{NFexsect}.

\begin{proof}
The proof of this theorem will proceed as follows:  First we
will show briefly that the normal form of a positive tree-pair
diagram can be factored uniquely into two parts:
\begin{enumerate}
\item The ``y" part, which represents
    $(y_{\delta_1})_{\beta_1}\cdots(y_{\delta_N})_{\beta_N}$
\item The ``z" part, which represents
    $(z_{\epsilon_1})_{\alpha_1}^{e_1}\cdots(z_{\epsilon_M})_{\alpha_M}^{e_M}$
\end{enumerate}
Then we will proceed to construct the normal form for each
part: first the section generated by the ``y" generators, and
then the section generated by the ``z" generators.  So we
proceed to factor $w$ uniquely as the product of two words $y$
and $z$.

Let $R^c(T)$ represent the tree which is derived from the tree
$T$ by replacing each $n_j$--ary right caret, for all $j\ne1$,
with a string of $\frac{n_j-1}{n_1-1}$ many right $n_1$--ary
carets, and let $R(T)$ represent the subtree of $T$ which
consists solely of carets on the the right side of the tree.
Then $(*,T)$ can be uniquely factored into a product of
$\left(*,R(T)\right)$ and $\left(*,R^c(T)\right)$:
\[(*,T)=\left(*,R(T_+)\right)\left(*,R^c(T_+)\right)\]

To see that this is true, we let $y=\left(*,R(T)\right)$ and
$z=\left(*,R^c(T)\right)$ for a given $w=(*,T)$, and we depict
the product $w=yz$ in Figure \ref{yzprod}.
\begin{figure}[t]
\centering
\includegraphics[width=2in]{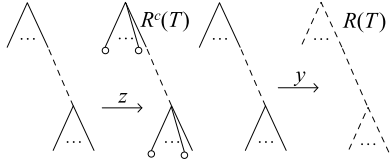}
\caption{The product $yz$ for positive word $w=yz$.  Solid lines indicate $n_1$--ary carets and dotted lines indicate $n_j$--ary carets for $j\in\{1,...,k\}$.} \label{yzprod}
\end{figure}
If we can show that
$NF(y)=(y_{\delta_1})_{\beta_1}\cdots(y_{\delta_N})_{\beta_N}$
and
$NF(z)=(z_{\epsilon_1})_{\alpha_1}^{e_1}\cdots(z_{\epsilon_M})_{\alpha_M}^{e_M}$,
then our theorem will follow immediately because the
factorization $w=yz$ is unique. Now we proceed to prove that
$NF(y)=(y_{\delta_1})_{\beta_1}\cdots(y_{\delta_N})_{\beta_N}$.

We show that
\[NF(y)=(y_{\delta_1})_{\beta_1}\cdots(y_{\delta_N})_{\beta_N}\]
where the caret with leftmost leaf index $(n_1-1)\beta_i$ in
$R(T)$ is type $n_{\delta_i}$ for $i=1,...,N$.

\par We prove this by induction.  Our inductive hypothesis is the statement itself.  It is clearly true when N=1.
Let
$y_l=(y_{\delta_i})_{\beta_i}\cdots(y_{\delta_l})_{\beta_l}$
and consider the product $y_l(y_j)_i$ for $i\ge L(y_l)-1$.  We
note that $L(y_l)=\beta_l+\eta_{\delta_l}$, so we have $i\ge
\beta_l+(\eta_{\delta_l}-1)$. We can see this composition in
Figure \ref{multbyy}
\begin{figure}[t]
\centering
\includegraphics[width=2in]{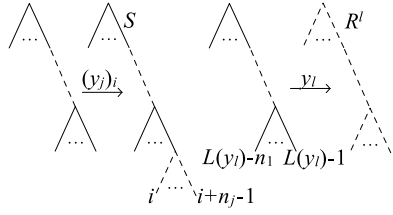}
\caption{The product $y_l(y_j)_i$.  Here $i\ge L(y_l)-1$, solid lines indicate $n_1$--ary carets, and dotted lines indicate $n_l$--ary carets for some $l\in\{2,...,l\}$ in $S_+$ and $l\in\{1,...,k\}$ in $T_+$ (and we assume that all dotted-line carets are not necessarily of the same type).} \label{multbyy}
\end{figure}
where we can see that the minimal tree-pair diagram
representative for $y_l(y_j)_i$ will be positive and will have
a range tree obtained from $R^l$ by the addition of (possibly
some $n_1$--ary right carets followed by) a single $n_j$--ary
right caret with leftmost leaf index number $i$.  So
$y_l(y_j)_i=(y_{\delta_1})_{\beta_1}\cdots(y_{\delta_l})_{\beta_l}(y_j)_i$
where $i>\beta_l$ for all $l$.

Now we show that
$NF(z)=(z_{\epsilon_1})_{\alpha_1}^{e_1}\cdots(z_{\epsilon_M})_{\alpha_M}^{e_M}$,
as described in Theorem \ref{NF}.  We show that
\[NF(z)=(z_{\epsilon_1})_{\alpha_1}^{e_1}\cdots(z_{\epsilon_M})_{\alpha_M}^{e_M}\hbox{ where }\alpha_{i+1}\ge\alpha_i\hbox{  and }\epsilon_i\ne\epsilon_{i+1}, \forall i\]

where $M$ is the number of carets in $R^c(T)$ with non-empty
leaf exponent matrices, the leaf $l_{\alpha_i}\in R^c(T)$ has
and non-empty leaf exponent matrix $\left(\begin{array}{ccc}
                    \epsilon_{i} & \cdots & \epsilon_{i+m}\\
                    e_{i} & \cdots & e_{i+m}
                    \end{array}\right)$.
    and $\alpha_i=\alpha_{i+1}$ iff $l_{\alpha_i}$ has leaf exponent matrix of the form $\left(\begin{array}{cccc}
                    \cdots & \epsilon_{i} & \epsilon_{i+1}& \cdots \\
                    \cdots & e_{i} & e_{i+1} & \cdots
                    \end{array}\right)$.

\par We prove this by induction, which will be motivated by the
following idea.  Let $x=(*,S)$ be a positive word.  Then the
product $x(z_j)_i$ has positive minimal tree-pair diagram
$(*,S^*)$, where $S^*$ is identical to $S$ except for an
$n_j$--ary caret hanging off the leaf that had index $i$ in $S$
(and possibly some $n_1$--ary right carets, whenever
$L(x)-n_1<\lfloor\frac{i}{n_1-1}\rfloor$).

\par To see this, we consult Figure \ref{multbygammaji}, which depicts the composition $x(z_j)_i$.
\begin{figure}[t]
\centering
\includegraphics[width=2in]{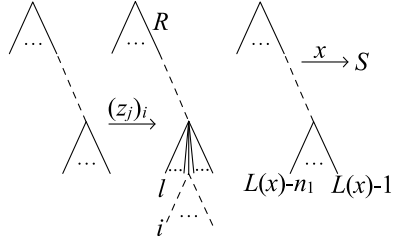}
\caption{The product $x(z_j)_i$ for positive word $w$.  Here $l=\lfloor\frac{i}{n_1-1}\rfloor$, solid lines indicate $n_1$--ary carets, and dotted lines indicate $n_j$--ary carets.} \label{multbygammaji}
\end{figure}
If $L(x)-n_1\ge l$, then to perform this composition, all we
need do is add an $n_j$--ary caret to the leaf with index $i$
in both trees of the tree-pair diagram for $x$.  If
$L(x)-n_1<l$, then to perform this composition, we will need to
add a string of $\frac{L(x)-n_1-l}{n_1-1}$ $n_1$--ary right
carets to the last leaf in both trees, and then we must add an
$n_j$--ary caret to the leaf with index $i$ in the resulting
tree-pair diagram representative for $x$.

\par Now suppose that the exponent matrices for leaves in $x$
with index greater than some fixed $I$ are empty.  Our
inductive hypothesis is that $x$ can be written in the form
given above.  Since the exponent matrix for leaves with index
greater than $I$ is empty, we have $\alpha_M\le I$.  We now
know that
$x(z_j)_I=(z_{\epsilon_1})_{\alpha_1}^{e_1}\cdots(z_{\epsilon_M})_{\alpha_M}^{e_M}(z_j)_I$
(where $\alpha_M\le I$) is represented by the positive minimal
tree-pair diagram representative with range tree obtained from
the range tree of
$x=(z_{\epsilon_1})_{\alpha_1}^{e_1}\cdots(z_{\epsilon_M})_{\alpha_M}^{e_M}$
by adding an $n_j$--ary caret at the leaf which had index
number $I$ in $S$ and (possibly) some $n_1$--ary carets added
to the right side of the tree.  Since adding $n_1$--ary carets
to the last leaf in the tree and adding a caret to the leaf
with index $I$ in $S$ does not change the leaf numbering of any
leaf with nonempty exponent matrix (since only leaves with
index numbers less than $I$ have nonempty exponent matrices),
it is clear that if we let $\epsilon_{M+1}=j$ and
$\alpha_{M+1}=I$, then
$x(z_j)_I=(z_{\epsilon_1})_{\alpha_1}^{e_1}\cdots(z_{\epsilon_M})_{\alpha_M}^{e_M}(z_{\epsilon_j})_{I}$
and the conditions of the proposition are satisfied.
\end{proof}

\subsubsection{Normal Form Example}\label{NFexsect} We find the normal form of the
element given in Figure \ref{NFex}.

\begin{figure}[htbp]
\centering
\includegraphics[width=2.5in]{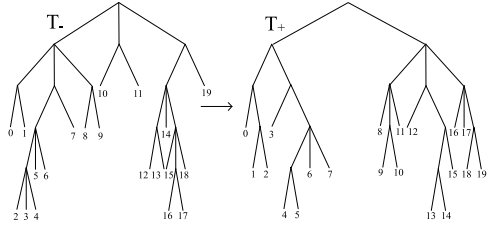}
\caption{Tree-pair diagram representative of an element $w$ of
$F(2,3)$} \label{NFex}
\end{figure}

\begin{eqnarray*}
NF(w) & = & (y_2)_1(y_2)_3(z_1)_0^2(z_1)_1(z_1)_3(z_2)_4(z_1)_4(z_2)_8(z_1)_9(z_1)_{12}(z_1)_{13}^2\\
&   & \cdot (z_1)_{16}^{-1}(z_2)_{15}^{-1}(z_1)_{12}^{-1}(z_2)_{12}^{-1}(z_1)_{10}^{-1}(z_1)_8^{-1}(z_2)_2^{-2}(z_1)_2^{-1}(z_1)_0^{-1}(z_2)_0^{-1}(y_2)_0^{-1}
\end{eqnarray*}

\par We now use the unique normal form to estimate the metric of \Fnm.

\section{The Metric on \Fnm{}}
\subsection{Standard Finite Presentation}
\par In \cite{F23} Stein also gave a method for finding finite presentations of \Fnm\ and gave a few examples.  However, she does not state these presentations explicitly, so we do so below.

\begin{thm}[Finite Presentation of Thompson's group $F(n_1,...,n_k)$]
Thompson's group \Fnm{}, where $n_1-1|n_i-1$ for all $i\in\{1,...,k\}$ has the following finite presentation:\\
The generators of this presentation are:
\[(y_{i})_0,..., (y_{i})_{n_i-1},(z_{j})_0,..., (z_{j})_{n_i-1}\]
where $i\in\{2,...,k\}$, $j\in\{1,...,k\}$.  We will use $X$ to denote this standard finite
generating set.

The relations of this presentation are:
\begin{enumerate}
\item \label{conj} For $(\gamma_m)_j$ a generator in the set
    $\{(y_{m})_j,(z_{m})_j|m\in\{1,...,k\}\}$, $j=0,...,n_m-1$
    \begin{enumerate}
    \item $(z_l)_i^{-1}(\gamma_m)_j(z_l)_i=(z_l)_0^{-1}(\gamma_m)_j(z_l)_0$
    \item $(z_l)_i^{-1}(z_l)_0^{-1}(\gamma_m)_j(z_l)_0(z_l)_i=(z_l)_0^{-2}(\gamma_m)_j(z_l)_0^2$
    \item $(z_l)_{n_l-1}^{-1}(z_l)_0^{-2}(\gamma_m)_1(z_l)_0^2(z_l)_{n_l-1}=(z_l)_0^{-3}(\gamma_m)_j(z_l)_0^3$
    \end{enumerate}
    for all $l\in\{1,...,k\}$ whenever $i<j$.
\item \label{equivtreeonright} For all $i,j\in\{1,...,k\}$, (where we use the convention that $(y_1)_i$ is the identity)
    \begin{enumerate}
    \item $(y_{i})_0(y_{j})_{1}(z_{j})_0(z_{j})_{n_j}\cdots(z_{j})_{(n_i-2)n_j}=
    (y_{j})_0(y_{i})_{1}(z_{i})_0(z_{i})_{n_i}\cdots(z_{i})_{(n_j-2)n_i}$
    \item
        $(y_{i})_1(y_{j})_{2}(z_{j})_1(z_{j})_{1+n_j}\cdots(z_{j})_{1+(n_i-2)n_j}\\=
        (y_j)_1(y_{i})_{2}(z_{i})_1(z_{i})_{1+n_i}\cdots(z_{i})_{1+(n_j-2)n_i}$
    \end{enumerate}
\item \label{equivetreenotonright} For all
    $i,j\in\{1,...,k\}$,
    \begin{enumerate}
    \item
    $(z_{i})_0(z_{j})_0(z_{j})_{n_j}\cdots(z_{j})_{(n_i-1)n_j}=
    (z_{j})_0(z_{i})_0(z_{i})_{n_i}\cdots(z_{i})_{(n_j-1)n_i}$
    \item
    $(z_{i})_1(z_{j})_1(z_{j})_{1+n_j}\cdots(z_{j})_{1+(n_i-1)n_j}=
    (z_{j})_1(z_{i})_1(z_{i})_{1+n_i}\cdots(z_{i})_{1+(n_j-1)n_i}$
    \end{enumerate}
\end{enumerate}

\end{thm}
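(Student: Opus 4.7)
The plan is to establish this finite presentation as Tietze-equivalent to the infinite presentation of \Fnm{} given in the preceding theorem. The strategy follows the classical reduction for Thompson's group $F$: use the conjugation relations to eliminate the infinitely many high-index generators, then verify that the remaining infinite relations all become consequences of the finite set of relations specified.

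First I would show that every generator $(\gamma_m)_j \in Z$ with $j \geq n_m$ can be written as a word in the finite generating set $X$. The infinite conjugation relation, rewritten as $(\gamma_m)_{j+(n_r-1)} = (z_r)_i^{-1}(\gamma_m)_j(z_r)_i$ for $i<j$, expresses each generator of large index as a conjugate of one with smaller index. Setting $r=1, i=0$ gives the shift $(\gamma_m)_{j+(n_1-1)} = (z_1)_0^{-1}(\gamma_m)_j(z_1)_0$ whenever $j>0$; iterating this, possibly combined with conjugation by other $(z_r)_0$ to absorb initial residues, writes every high-index generator as an explicit word in $X$. The hypothesis $n_1-1 \mid n_i-1$ is precisely what guarantees that these shifts by multiples of $n_1-1$ align correctly with the intermediate $n_i-1$ shifts appearing in the other conjugation relations.

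Next, I would verify that under this identification, each infinite relation follows from the finite relations. The finite conjugation relations 1(a)--(c) serve as base cases for an induction: 1(a) says conjugation by $(z_l)_i$ agrees with conjugation by $(z_l)_0$ for the lowest range of $i$, while 1(b) and 1(c) extend this for conjugation by higher powers of $(z_l)_0$ together with a $(z_l)_{n_l-1}$ correction. A strong induction on the conjugating index $i$, repeatedly using the fact that $(z_l)_i$ is expressible as a conjugate of $(z_l)_0$ by a word already shown to satisfy the relation, recovers the full infinite conjugation relation. For the equivalent-tree relations, the finite relations 2(a)--(b) and 3(a)--(b) provide base cases at $l=0$ and $l=1$; each higher translate is obtained by conjugating an appropriate base case by a suitable power of $(z_1)_0$ and simplifying using the conjugation relations already established.

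The main obstacle will be verifying that the chosen base cases actually suffice — in particular, understanding why including both the $l=0$ and $l=1$ cases is necessary and sufficient for the equivalent-tree relations. Conjugating a relation indexed by $l=0$ by $(z_1)_0$ shifts all indices by $n_1-1$, which does not in general land on $l=1$ when $n_1 > 2$; so the $l=1$ case must be posited separately, but once both $l=0$ and $l=1$ are available, every higher $l$ can be reached inductively by alternating conjugations by $(z_1)_0$ with applications of 1(a)--(c). Carrying out this verification amounts to a finite combinatorial check once the general inductive framework is in place, after which Tietze equivalence of the finite and infinite presentations follows.
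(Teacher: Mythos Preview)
The paper does not actually supply a proof of this theorem. It introduces the finite presentation immediately after remarking that Stein in \cite{F23} ``gave a method for finding finite presentations of \Fnm\ and gave a few examples,'' and simply records the explicit presentation that results from applying her method, with no argument in the text. So there is nothing in the paper to compare your proposal against.

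That said, your Tietze-transformation outline is the natural route and is broadly correct in spirit. One point deserves more care than you give it: your claim that ``once both $l=0$ and $l=1$ are available, every higher $l$ can be reached inductively by alternating conjugations by $(z_1)_0$ with applications of 1(a)--(c).'' Conjugation by any $(z_r)_i$ shifts subscripts by $n_r-1$, and under the hypothesis $n_1-1\mid n_r-1$ every such shift is a multiple of $n_1-1$. Thus from the base cases $l=0$ and $l=1$ you reach only residues $0$ and $1$ modulo $n_1-1$, which is not all of $\mathbb{Z}_{\ge 0}$ when $n_1>2$. To complete the argument you must either (i) argue that the infinite relations at other residues are redundant---for instance, that the $l$-instance of relation~(2) or~(3) is itself a consequence of lower-$l$ instances together with the conjugation relations, not merely a conjugate of one---or (ii) invoke the structure of Stein's complex in \cite{F23} directly, which is what the paper implicitly relies on. Your sketch does not yet address this, and ``a finite combinatorial check'' understates what is required.
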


\par We note here that in the case of $F(2,3)$, this presentation collapses even further, as the fact that $n_1=2$ means that all $(z_j)_i$ for $j>1$ can be expressed in terms of the other generators by using the relations in \ref{equivtreeonright}; however, this will not work in general.

\subsection{The Metric}
\par For Thompson's group $F(n)$, Burillo, Cleary and Stein have
shown in \cite{BurrilloFpMetric} that the metric is
quasi-isometric to the number of carets in the minimal $2$--ary
tree-pair diagram representative of the element of the group
(this also follows from Fordham's exact metric on $F(n)$ in \cite{length},
developed after \cite{BurrilloFpMetric}). This
however, is not the case for \Fnm{}. The simplest way to
illustrate this is to take powers of the generator $(y_j)_0$
for some $j\in\{2,...,k\}$ and to show that the number of
carets in the minimal tree-pair diagram representative for
$(y_j)_0^n$ grows exponentially as $n$ (and therefore
$|(y_j)_0^n|_{ X}$) increases linearly.

The convention in the literature on $F(n)$ is to talk about the
number of carets in the minimal tree-pair diagram
representative, but since in \Fnm\ the number of carets in an
\nary tree-pair diagram may not be the same in both trees, we
instead choose to refer to the number of leaves, as the number
of carets in an \nary\ tree-pair diagram is quasi-isometric to
the number of leaves.  (Since the number of leaves in an
$n$--ary tree containing $C$ carets will be $(n-1)C+1$, the
number of leaves, $L$, in an \nary tree will be such that
$(n_1-1)C+1\le L\le(n_k-1)C+1$, which implies that
$\frac{1}{n_k}C\le L\le n_kC$.)

\begin{defn}[quasi-isometry]
A quasi-isometry distorts lengths by no more than a constant
factor.  More formally, a quasi-isometry is a map between two
length functions $|x|_X,|x|_Y$ such that for all $x$ in the
group there exists a fixed $c$ such that
\[\frac{1}{c}|x|_X\le |x|_Y\le c|x|_X\]
\end{defn}

\begin{thm}\label{metricnotcaretnum}
The metric on \Fnm{} is not quasi-isometric to the number of
carets/leaves in the minimal tree-pair diagram representatives
of elements of \Fnm{}.
\end{thm}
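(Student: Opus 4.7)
The plan is to exhibit a sequence of elements in \Fnm\ whose word lengths in $X$ grow linearly but whose minimal tree-pair leaf-counts grow exponentially. Following the hint preceding the theorem, I take $g_n := (y_j)_0^n$ for a fixed $j \in \{2, \ldots, k\}$; the divisibility hypothesis $n_1 - 1 \mid n_j - 1$ together with $n_j \geq 2$ forces $n_j \geq n_1$, and in any setting where \Fnm\ is not already isomorphic to $F(n_1)$ we may pick $j$ so that $n_j > n_1$ and some prime $p$ divides $n_j$ but not $n_1$. Since $(y_j)_0 \in X$, subadditivity of word length gives $|g_n|_X \leq n$, so everything reduces to an exponential lower bound on $L(g_n)$.

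From the tree-pair diagram of $(y_j)_0$ depicted in Figure \ref{infgenFone}, one reads off that $(y_j)_0$ sends its leftmost domain leaf $[0, 1/n_1]$ linearly onto its leftmost range leaf $[0, 1/n_j]$ with slope $n_1/n_j$. Because $n_j > n_1$, the image $[0, 1/n_j]$ sits inside $[0, 1/n_1]$, and induction on $n$ yields
\[
g_n(x) = \left(\frac{n_1}{n_j}\right)^{\!n} x \quad\text{for all } x \in [0, 1/n_1],
\]
with the slope jumping to a different value just beyond $x = 1/n_1$. Hence the leftmost linear piece of $g_n$ is exactly $[0, 1/n_1]$, with slope $(n_1/n_j)^n = n_1^n n_j^{-n}$.

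Now let $(T_-, T_+)$ be any \nary\ tree-pair diagram representing $g_n$. Since $1/n_1$ and $1/n_j$ are genuine breakpoints, the leaves of $T_\pm$ split cleanly across them, and the leaves of $T_-$ lying in $[0, 1/n_1]$ are paired by $g_n$ with the leaves of $T_+$ lying in $[0, 1/n_j]$. For any such paired pair, write the leaf lengths as $\ell_\pm = n_1^{-a_1^\pm}\cdots n_k^{-a_k^\pm}$ with non-negative exponents; the PL slope equation gives $\ell_+/\ell_- = (n_1/n_j)^n$, so taking the $p$-adic valuation
\[
v_p(\ell_+) \;=\; v_p(\ell_-) - n\, v_p(n_j) \;\le\; -n\, v_p(n_j) \;\le\; -n,
\]
using $v_p(\ell_-) \le 0$ and $v_p(n_j) \ge 1$. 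Hence $\ell_+ \le p^{-n}$. The $T_+$-leaf-lengths in $[0, 1/n_j]$ sum to $1/n_j$, so their count is at least $(1/n_j)/p^{-n} = p^n/n_j$, proving $L(g_n) \ge p^n/n_j$.

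If $L$ were quasi-isometric to $|\cdot|_X$, there would exist a constant $c > 0$ with $L(g_n) \le c\,|g_n|_X \le c\,n$ for all $n$; but this contradicts $L(g_n) \ge p^n/n_j$ as $n \to \infty$, since $p \ge 2$. The main technical work is verifying the leftmost-slope induction for $g_n$, which follows directly from inspection of the generator's tree-pair diagram; the remaining estimates are elementary length- and valuation-bookkeeping.
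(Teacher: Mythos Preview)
Your argument has a genuine gap in the breakpoint and pairing claims. First, for $n_1>2$ the leftmost linear piece of $(y_j)_0$ is $[0,\tfrac{n_1-1}{n_1}]$, not $[0,\tfrac{1}{n_1}]$: the first $n_1-1$ domain leaves all carry the same slope $n_1/n_j$. More damagingly, even when $n_1=2$, your pairing ``leaves of $T_-$ in $[0,1/n_1]$ $\leftrightarrow$ leaves of $T_+$ in $[0,1/n_j]$'' is false for $n>1$, because $g_n\bigl([0,\tfrac{1}{n_1}]\bigr)=\bigl[0,(n_1/n_j)^n\cdot\tfrac{1}{n_1}\bigr]$, not $[0,1/n_j]$. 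The $T_+$-leaves you are bounding therefore have total length $(n_1/n_j)^n\cdot\tfrac{n_1-1}{n_1}$, and dividing by $p^{-n}$ yields only a count of order $(n_1p/n_j)^n$, which need not grow at all (e.g.\ $n_1=2$, $n_j=6$, $p=3$ gives a constant). There is also a secondary problem with your choice of prime: a $p$ dividing $n_j$ but not $n_1$ need not exist even when $\Fnm$ is genuinely larger than $F(n_1)$ --- take $n_1=6$, $n_j=96$, where $n_1-1=5\mid 95=n_j-1$ yet $96=2^5\cdot 3$ has exactly the prime factors of $6$.

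The paper's proof avoids the analytic route entirely. It shows by induction on the tree-pair composition that $(y_j)_0^n$ is represented by a diagram whose domain tree is purely $n_1$--ary and whose range tree is a balanced $n_j$--ary tree with $n_j^n$ leaves; minimality then comes for free from Corollary~\ref{twocarettypes}, since the two trees share no caret type. If you want to salvage the valuation idea, the key missing observation is that \emph{every} slope of $g_n$ has the form $n_1^{m}n_j^{-n}$, so the $p$-adic bound applies to every leaf of $T_+$ simultaneously and one can sum over all of $[0,1]$; but even then the prime must be chosen with more care than you have, and in examples like $F(6,96)$ a single-prime argument will not suffice.
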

\begin{proof}
We prove this by showing that the number of leaves in the
minimal tree-pair diagram representative for $(y_j)_0^n$ grows
exponentially as $|(y_j)_0^n|_{ X}=n$ increases linearly, for
$n\in\mathbb{N}$.  We let $(y_j)_0=(T_-,T_+)$ and
$(y_j)_0^n=(T_-^n,T_+^n)$ denote minimal representatives.

Now we prove this by induction.  Our inductive hypothesis is
that $T_-^n$ is an $n_1$--ary tree and that $T_+^n$ is a
balanced $n_j$--ary tree with $n_j^n$ leaves.  We can see that
this is the case when $n=1$ simply by looking at the minimal
tree-pair diagram representative of $(y_j)_0$ given in Figure
\ref{infgenFone}.  Now we suppose that these conditions hold
for some $n\ge1$.  Then when computing the product
$(y_j)_0^{n-1}(y_j)_0$, we must add carets to \Ttree\ and
$(T_-^{n-1},T_+^{n-1})$ to obtain $(T_-^*,T_+^*)$ and
$\left((T_-^{n-1})^*,(T_+^{n-1})^*\right)$ so that
$T_+^*\equiv(T_-^{n-1})^*$.  But since $T_+$ consists of one
$n_j$--ary caret and $T_-^{n-1}$ is an $n_1$--ary tree, we must
add one $n_j$--ary caret to each leaf in $T_-^{n-1}$ and by
extension to each leaf in $T_+^{n-1}$; these are the only
carets we need to add to $(T_-^{n-1},T_+^{n-1})$, so
$(T_+^{n-1})^*$ is a balanced $n_j$--ary tree with
$n_j^{n-1}n_j=n_j^n$ leaves.  Similarly, we must add only
$n_1$--ary carets to $T_+$ and by extension to $T_-$, so
$T_-^*$ is an $n_1$--ary tree.  So we have
$(T_-^{n},T_+^{n})\equiv(T_-^*,(T_+^{n-1})^*)$, and by
Corollary \ref{twocarettypes}, $(T_-^*,(T_+^{n-1})^*)$ is
minimal.
\end{proof}

We now generalize this idea to produce a lower bound for the
metric on \Fnm{} in terms of the number of leaves in an element's minimal tree-pair diagram representative.  We introduce the following lemma, which will be necessary for our proof of the lower bound.

\begin{lem}\label{leavesaddedupperbd}
Suppose there exists fixed $a\in\mathbb{N}$ and $x\in\Fnm$ such
that $\forall g\in X\cup X^{-}$, $L(xg)-L(x)\le aL(x)$. Then
for $g_1,...,g_n\in X\cup X^{-1}$,
\[L(xg_1\cdots g_n)\le(1+a)^nL(x)\]
\end{lem}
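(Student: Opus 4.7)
The plan is to prove this by induction on $n$. The base case $n=0$ is trivial since $L(x) = (1+a)^0 L(x)$, and the case $n=1$ is immediate from the hypothesis: $L(xg_1) \le L(x) + aL(x) = (1+a)L(x)$.

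For the inductive step, set $y = xg_1\cdots g_{n-1}$ and assume $L(y) \le (1+a)^{n-1}L(x)$. Applying the per-generator leaf-growth bound at $y$ with generator $g_n$ gives
\[L(yg_n) \le L(y) + aL(y) = (1+a)L(y) \le (1+a)(1+a)^{n-1}L(x) = (1+a)^n L(x),\]
which is exactly the desired inequality for $n$. Iterating this recursion $n$ times beginning from $L(x)$ yields the stated conclusion.

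The single subtle point I expect to address is the scope of the hypothesis: as literally written, the bound $L(xg)-L(x)\le aL(x)$ is asserted only for the specific starting element $x$, whereas the induction above requires the same-shape bound at each intermediate product $xg_1\cdots g_i$. I would read the intended hypothesis as uniform over the group, i.e., that multiplying any element of \Fnm{} by a single generator from $X\cup X^{-1}$ can enlarge its leaf count by at most a factor of $1+a$. This is the natural reading since the conclusion $L(xg_1\cdots g_n)\le(1+a)^nL(x)$ implicitly requires the bound to propagate along the entire product. Under that interpretation, the proof reduces to the one-line induction above, and no further machinery beyond the recursive application of the leaf-growth bound is needed.
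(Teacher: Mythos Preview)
Your proof is correct and follows essentially the same induction as the paper: the paper also proves the base case $n=1$ directly and then, for the inductive step, applies the per-generator bound at the intermediate element $xg_1\cdots g_{n-1}$ to obtain $L(xg_1\cdots g_n)\le (1+a)L(xg_1\cdots g_{n-1})\le (1+a)^nL(x)$. Your observation about the scope of the hypothesis is apt---the paper's own inductive step likewise tacitly applies the bound at the intermediate products rather than at $x$ alone, so the uniform reading you adopt is exactly the one the paper uses (and is what is actually verified when the lemma is invoked in the proof of the lower bound).
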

\begin{proof}
We prove this by induction.  For $n=1$ we have
\[L(xg_1)=L(x)+L(xg_1)-L(x)=L(x)+aL(x)=(a+1)L(x)\]
so our inductive hypothesis holds for $n=1$. Now we suppose
that it holds for all values up to arbitrary $n-1$: Then
\begin{eqnarray*}
    L(xg_1\cdots g_n) & \le & L(xg_1\cdots g_{n-1})+a(L(xg_1\cdots g_{n-1}))\\
      & \le &
     (1+a)^{n-1}L(x)+a((1+a)^{n-1}L(x))
      =  (1+a)^nL(x)
\end{eqnarray*}
\end{proof}

\begin{defn}[depth, $D(T)$, $D(x)$, and level]
The depth of a tree $T$ is the maximum distance from the root vertex to any leaf vertex, and the depth of an element $x$ is the maximal depth of the two trees in its minimal tree-pair diagram representative.  We use $D(T)$ and $D(x)$ to denote these depths, respectively.  A level is the subgraph of carets in a tree which are the same distance from the root vertex.
\end{defn}

\begin{thm}[Metric Lower Bound]\label{lowerbd}
For all elements $x\in \Fnm$, there exists fixed $B\in\mathbb{N}$ such that
\[|x|_{ X}\ge \log_BL(x)\]
\end{thm}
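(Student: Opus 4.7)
The plan is to combine Lemma \ref{leavesaddedupperbd} with a uniform bound showing that multiplication by any single generator cannot inflate the leaf count by more than a constant multiplicative factor. Specifically, I claim there is a constant $C\in\mathbb{N}$, depending only on the finite generating set $X$ and on $(n_1,\ldots,n_k)$, such that $L(xg)\le C\cdot L(x)$ for every $x\in\Fnm$ and every $g\in X\cup X^{-1}$. Granted this bound, the theorem with $B:=C$ will fall out immediately by iterating the estimate from the identity.

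To prove the claim, let $x=(T_-,T_+)$ and $g=(S_-,S_+)$, and recall that $xg$ is computed by refining $T_-$ and $S_+$ to a common subdivision and then reading off the product as $(S_-^*,T_+^*)$. Because $X$ is finite, there is a uniform depth bound $D_X$ on the trees appearing in the minimal tree-pair diagrams of the generators; in particular, the $n_r$-valence of every leaf of $S_+$ is bounded by $D_X$ for each $r$. Using the construction in Theorem \ref{commonsubdivision}, each leaf of $T_-$ need only be refined by attaching a subtree whose valence vector is bounded componentwise by $\langle D_X,\ldots,D_X\rangle$, which contributes at most $n_1^{D_X}\cdots n_k^{D_X}$ new leaves per existing leaf of $T_-$. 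Setting $C:=n_1^{D_X}\cdots n_k^{D_X}$ and noting that the minimal tree-pair diagram of $xg$ has no more leaves than this constructed representative, the claim follows.

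With the multiplicative bound in hand, I invoke Lemma \ref{leavesaddedupperbd} with $a=C-1$ and starting element the identity $e$, for which $L(e)=1$. Given any $w\in\Fnm$ with $|w|_X=n$, expressed as $w=g_1\cdots g_n$ with $g_i\in X\cup X^{-1}$, the lemma yields
\[L(w)\;=\;L(e\cdot g_1\cdots g_n)\;\le\;(1+a)^n L(e)\;=\;C^n.\]
Applying $\log_C$ to both sides gives $|w|_X=n\ge\log_C L(w)$, which is precisely the desired bound with $B=C$.

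The main obstacle is the uniform multiplicative bound itself: the crude refinement from Theorem \ref{commonsubdivision} is adequate for a logarithmic estimate, but care is needed to verify that refining $T_-$ in this manner (together with the correspondingly forced refinement of $T_+$ at the matching leaf indices) indeed produces a legitimate, though possibly nonminimal, tree-pair diagram representative of $xg$, so that $L(xg)$ is genuinely bounded above by the leaf count after refinement. Once this is checked, the rest of the argument is a one-line application of the lemma.
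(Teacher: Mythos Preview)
Your proposal is correct and follows essentially the same approach as the paper: both establish a uniform multiplicative bound $L(xg)\le C\,L(x)$ by refining $T_-$ via the common-subdivision construction, then iterate this with Lemma~\ref{leavesaddedupperbd} and take a logarithm. The only cosmetic differences are that the paper computes a sharper explicit constant $C=n_kn_1^{(n_k-1)/(n_1-1)}$ by exploiting the specific shape of the generators, and starts the iteration at a single generator rather than the identity; your choice to start at $e$ with $L(e)=1$ is in fact slightly cleaner.
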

\begin{proof}
We consider multiplication of an arbitrary non-trivial element
$x=\Ttree\in \Fnm$ by $g=\Stree\in X\cup X^{-1}$.  We define
$\langle v_1,...,v_k\rangle$ so that $v_i$ is the maximum value
of the $n_i$--ary valence of all leaves in $S_+$. In order to
compute the product $xg$, we must make $T_-$ equivalent to
$S_+$, so we may need to add carets to $T_-$, and to $T_+$ by
extension, in order to achieve this.  The number of carets we
need to add will be bounded from above by the number of carets
needed to increase the valence of each leaf in $T_-$ by
$\langle v_1,...,v_k\rangle$; this is equivalent to adding a
balanced tree to each leaf in $T_-$with valence $\langle
v_1,...,v_k\rangle$.

\par But if we let $b=\frac{n_k-1}{n_1-1}$, $\forall g\in X\cup X^{-1}$,
$D(g)\le b+1$, and so the maximum $n_1$--ary
valence is $b+1$.  Additionally, there exists at most a
single $j\in\{2,...,k\}$ such that the $n_j$--ary valence is 1,
and the $n_l$--ary valence for all $l\ne1,l\ne j$ is 0.  So the
balanced tree to be added to each leaf of $T_-$ will have at most one level consisting of $n_j$--ary
carets and $b+1$ levels consisting of
$n_1$--ary carets, which implies that this balanced tree will
have at most $n_kn_1^b$ leaves.  So adding
this balanced tree to every leaf in \Ttree\ will add
$\left(n_kn_1^b-1\right)L(x)$
leaves to \Ttree\ so that
\[L(xg)\le n_kn_1^bL(x)\]
 And so, by
Lemma \ref{leavesaddedupperbd}, this implies that for arbitrary
$g_1,...,g_n\in X\cup X^{-1}$
\[L(xg_1\cdots g_n)\le\left(n_kn_1^b\right)^nL(x)\le a\left(n_kn_1^b\right)^n\]
where we obtain the second inequality by replacing $x$ with $g$; this inequality holds because
$L(x)\le a-1$ $\forall g\in X\cup X^{-1}$ for $a=n_k+n_1$.
So for
any $x$ in \Fnm{}, where $|x|_{ X}=n+1$ and
$B=n_kn_1^b$
\[L(x)\le aB^n\]

Taking the log of both sides and rewriting, and noting that $0<\log_B a<1$ yields:
\[|x|_{ X} > \log_B L(x)\]
\end{proof}

\begin{rmk}\label{sharplow}
The order of the lower bound given in Theorem \ref{lowerbd} is
sharp.
\end{rmk}
\begin{proof}
This follows immediately from the proof of Theorem
\ref{metricnotcaretnum}:
\[L\bigl({(y_j)_0^n}\bigr)=n_j^{n}\]
and therefore
\[|(y_j)_0^n|_{ X}\le n= \log_{n_j}\Bigl(L\bigl((y_j)_0^n\bigr)\Bigr)\]
\end{proof}

\par Now we proceed to find an upper bound for the metric.  Our proof of the upper bound of the metric will use the
normal from which was developed in the previous section.  We
recall that the normal form of a positive word $w$ will be of
the following form (see Theorem \ref{NF} for details):

\[(y_{\delta_1})_{\beta_1}\cdots(y_{\delta_N})_{\beta_N}(z_{\epsilon_1})_{\alpha_1}^{e_1}\cdots(z_{\epsilon_M})_{\alpha_M}^{e_M}\]

\par We prove results for positive words, and
then a simple corollary is all that is needed to extend this to
all words in \Fnm.

\begin{thm}[Metric Upper Bound]\label{lengthfingen}
For any positive word $w$ in \Fnm{}, there exist fixed
$c\in\mathbb{N}$ such that:
\[|w|_{X}<cL(w)\]
\end{thm}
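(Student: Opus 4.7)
The plan is to start with the positive normal form of Theorem \ref{NF}, rewrite each infinite generator in the finite generating set $X$ using the conjugation relations of item \ref{conj}, and then exploit a telescoping cancellation so that the total letter count is $O(L(w))$ rather than the naive $O(L(w)^2)$. Concretely, by iterating the conjugation relation $\gamma_j(z_1)_0=(z_1)_0\gamma_{j+(n_1-1)}$ (valid whenever $j>0$), every infinite generator $(y_{\delta_i})_{\beta_i}$ or $(z_{\epsilon_i})_{\alpha_i}$ appearing in the normal form can be written in the form $(z_1)_0^{-q_i}\hat{\gamma}_i(z_1)_0^{q_i}$, where $\hat{\gamma}_i\in X$ has subscript less than $n_1$ and $q_i$ is roughly $\beta_i/(n_1-1)$ (respectively $\alpha_i/(n_1-1)$); each such block has length at most $2q_i+O(1)$ in $X$.

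Next, substituting these expressions into the normal form $(y_{\delta_1})_{\beta_1}\cdots(y_{\delta_N})_{\beta_N}(z_{\epsilon_1})_{\alpha_1}^{e_1}\cdots(z_{\epsilon_M})_{\alpha_M}^{e_M}$ produces a product of blocks whose adjacent $(z_1)_0^{q_i}$ and $(z_1)_0^{-q_{i+1}}$ factors combine into a single $(z_1)_0^{q_i-q_{i+1}}$. Because the normal form forces the $\beta_i$ to be strictly increasing (with gap at least $n_{\delta_i}-1$) and the $\alpha_i$ to be non-decreasing, the exponents $q_i$ are monotone within each of the two parts, so each middle factor is a non-positive power of length $q_{i+1}-q_i$. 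Telescoping then bounds the $y$-part by $2q_N+O(N)$ letters and the $z$-part by $2q_M'+\sum_i|e_i|+O(M)$ letters, where $q_N$ and $q_M'$ are the largest exponents in each part.

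The final step is to bound each of these quantities in terms of $L(w)$. Every index $\beta_i,\alpha_i$ labels a leaf of the range tree $T_+$ of the minimal tree-pair diagram of $w$, so $\beta_N,\alpha_M\le L(w)$. Both $N$ (non-$n_1$-ary carets in $R(T_+)$) and $M$ (carets of $T_+$ with nonempty leaf exponent matrix) are at most the total caret count of $T_+$, which is $O(L(w))$ by the leaf/caret inequality recorded in the proof of Theorem \ref{metricnotcaretnum}. The exponents $|e_i|$ count carets along the leaf-path subpaths of Definition \ref{matrix}, so $\sum_i|e_i|$ is again bounded by the total caret count. Combining these estimates gives $|w|_X\le cL(w)$ for some constant $c$ depending only on $n_1,\ldots,n_k$.

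The main obstacle I anticipate is the interface between the $y$-part and the $z$-part of the normal form, where the telescoping breaks because $q_N$ and the first exponent $q_1'$ of the $z$-part are unrelated and cannot cancel; however this interface contributes at most $q_N+q_1'=O(L(w))$ extra letters and can be absorbed into $c$. The remaining work is routine bookkeeping to track the bounded-index generators $\hat{\gamma}_i$, to verify that iterated conjugation by $(z_1)_0$ (rather than by other $(z_r)_0$) suffices since $n_1-1$ divides each $n_j-1$ by hypothesis, and to extract an explicit constant from the recursion.
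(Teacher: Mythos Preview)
Your proposal is correct and follows essentially the same strategy as the paper: rewrite each infinite generator as a $(z_1)_0$--conjugate of a finite generator via the conjugation relation, telescope the resulting powers of $(z_1)_0$ using the monotonicity of the $\beta_i$ and $\alpha_i$ in the normal form, and then bound the surviving quantities $\beta_N,\alpha_M,N,M,\sum e_i$ by the caret/leaf count of the minimal tree-pair diagram. The paper organizes this slightly differently---it first isolates the inequality $|w|_Z\le L(w)$ before passing to $X$, and it records the telescoped sum explicitly as $\frac{\beta_1}{n_1-1}+\frac{\alpha_M}{n_1-1}+\frac{|\beta_N-\alpha_1|}{n_1-1}+\sum\bigl|\frac{\beta_i-\beta_{i+1}}{n_1-1}\bigr|+\sum\bigl|\frac{\alpha_i-\alpha_{i+1}}{n_1-1}\bigr|\le\frac{4L(w)}{n_1-1}$---but the substance is the same, including your handling of the $y$/$z$ interface (the paper's $|\beta_N-\alpha_1|$ term).
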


\begin{proof}
To begin, we prove our results for the standard infinite
generating set $Z$; then we will extend them to the finite
generating set $X$.  First we will show that for a positive
word $w$
\[|w|_{Z}\le L(w)\]
We begin by noting that the total number of $(z_j)_i\in Z$
generators present in the normal form expression for a positive
word in \Fnm{} is equal to the number of non-right carets in
the range tree of the minimal tree-pair diagram representative;
this quantity can be expressed by the sum $\sum_{i=1}^M e_i$.
We can see this by considering how each generator in the normal
form is derived from the minimal tree-pair diagram (see Theorem
\ref{NF}).

Next we note that if we let $r(w)$ denote the number of carets on the right edge of the range tree of the minimal tree-pair diagram representative of $w$, then for a positive word $w$ ,
\[N\le r(w)\]
where $N$ is taken from the normal form expression for $w$ (see
Theorem \ref{NF}).  To see this we need only recall that each
$(y_j)_i$ generator in $NF(w)$ will contribute exactly one
caret to the right side of the range tree in the minimal
tree-pair diagram of $w$.

So if we let $N(w)$ denote the number of carets in the range tree of
the minimal tree-pair diagram representative of $w$, we have
\[N(w)\ge\displaystyle\sum_{i=1}^M e_i+N\]
where $e_i$, $N$ and $M$ are taken from $NF(w)$ (see Theorem \ref{NF}).
But since $\displaystyle\sum_{i=1}^M e_i+N$ is just the number of
generators present in $NF(w)$, we must have
\[|w|_Z\le N(w)\le L(w)\].

\par Now we are ready to use our results for $|x|_Z$ to derive an upper bound for $|x|_X$.
Let $R(i)$ be the remainder of $\frac{i}{n_1-1}$.  Then by
looking at the relator
$(\gamma_l)_j(z_1)_i=(z_1)_i(\gamma_l)_{j+n_1-1}$
when $i<j$, where $\gamma\in\{y,z\}$, it is clear that
\[(z_j)_i=(z_1)_0^{-i/(n_1-1)}(z_j)_{R(i)}(z_1)_0^{i/(n_1-1)}\]
so that $(z_j)_{R(i)}\in X$.  Then by substituting these relator types into
$$NF(w)  =  (y_{\delta_1})_{\beta_1}\cdots(y_{\delta_N})_{\beta_N}(z_{\epsilon_1})_{\alpha_1}^{e_1}\cdots(z_{\epsilon_M})_{\alpha_M}^{e_M}$$
we obtain:
\begin{eqnarray*}
|w|_X & \le & |w|_Z+\frac{\beta_1}{n_1-1}+\frac{\alpha_M}{n_1-1}+\frac{|\beta_N-\alpha_1|}{n_1-1}+\displaystyle\sum_{i=1}^{N-1}\left|\frac{\beta_i-\beta_{i+1}}{n_1-1}\right|+\displaystyle\sum_{i=1}^{M-1}\left|\frac{\alpha_i-\alpha_{i+1}}{n_1-1}\right|\\
     & \le & |w|_Z+\frac{4\max\{\beta_N,\alpha_{M}\}}{n_1-1}\le L(w)+\frac{4L(w)}{n_1-1}\le5L(w)
\end{eqnarray*}
since $\beta_N$ and $\alpha_M$ both denote leaf index numbers
in the minimal tree-pair diagram representative of $w$ and
therefore $L(w)>\beta_N,\alpha_M$.
\end{proof}

\par Now we extend the results for
positive words to all words in \Fnm{}.

\begin{cor}\label{upperbd}
$\exists$ fixed $d\in\mathbb{N}$ such that for any (not
necessarily positive) $w\in \Fnm{}$
\[|w|_X\le dL(w)\]
\end{cor}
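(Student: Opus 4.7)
The plan is to reduce Corollary \ref{upperbd} to the positive case treated in Theorem \ref{lengthfingen} by exploiting the canonical factorization of an arbitrary element of \Fnm{} into a positive word times the inverse of a positive word, as established in Subsection 4.3. Given $w\in\Fnm$ with unique minimal tree-pair diagram $(T_-,T_+)$, I would set $w_+$ to be the positive element represented by $(*,T_+)$ and $w_-$ to be the positive element represented by $(*,T_-)$, so that $w = w_+ w_-^{-1}$. The triangle inequality in the word metric with respect to $X$ then immediately gives
\[|w|_X \le |w_+|_X + |w_-^{-1}|_X = |w_+|_X + |w_-|_X.\]

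Next I would bound $L(w_+)$ and $L(w_-)$ in terms of $L(w)$. The positive tree-pair diagram $(*,T_+)$ has range tree $T_+$, which has exactly $L(w)$ leaves, so the minimal positive representative of $w_+$ has at most $L(w)$ leaves; symmetrically $L(w_-)\le L(w)$. Applying Theorem \ref{lengthfingen} to each of the positive words $w_\pm$ then gives $|w_\pm|_X \le c L(w_\pm) \le c L(w)$, where $c$ is the constant from Theorem \ref{lengthfingen}. Combining with the triangle inequality above yields $|w|_X \le 2c L(w)$, so setting $d=2c$ completes the proof.

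The only potential obstacle is the leaf-count comparison for the positive factors, but it resolves immediately because the positive representatives $(*,T_\pm)$ are built from the very trees that realize $L(w)$; minimization of these representatives can only decrease the leaf count. No delicate argument is required beyond the factorization and the triangle inequality.
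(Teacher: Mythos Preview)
Your proposal is correct and follows essentially the same approach as the paper: factor $w=w_+w_-^{-1}$ into positive pieces, apply the triangle inequality, invoke Theorem~\ref{lengthfingen} on each factor, and set $d=2c$. The only difference is that you spell out explicitly why $L(w_\pm)\le L(w)$ (since $(*,T_\pm)$ already has $L(w)$ leaves and minimizing cannot increase the count), whereas the paper leaves this implicit in the final inequality $cL(w_+)+cL(w_-)\le dL(w)$.
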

\begin{proof}
For any word $w$, we can factor it uniquely into the product
$w=w_+w_-^{-1}$ where $w_+$ and $w_-$ are both positive words.
Clearly letting $d=2c$ yields
\[|w|\le|w_+|+|w_-|  \le  cL(w_+)+cL(w_-)\le dL(w)\]
\end{proof}

\begin{rmk}\label{sharpup}
The order of the upper bound given in Corollary \ref{upperbd}
is sharp.
\end{rmk}
\begin{proof}
We will show that an arbitrary positive element $w=\Ttree$ has
length which is quasi-isometric to $L(w)$.  First we relate the
depth of an element to the length, and then we use this to
bound the length with respect to $L(w)$.  For any word
$w\in\Fnm$, there exists fixed $c\in\mathbb{N}$ such that
\[D(w)\le c|w|_X\]
where we recall that $D(w)$ denotes the depth of the minimal
tree-pair diagram of $w$.

We can prove this by induction.  Let $c=\frac{n_k-1}{n_1-1}+2$; our inductive hypothesis will
be
\[D(g_1\cdots g_n)\le cn\]
for $g_1,...,g_n\in X\cup X^{-1}$ for some $n\in\mathbb{N}$.
If $g_1,...,g_n$ is a minimal length expression for $w$, and
this inductive hypothesis holds, then it is clear that our
lemma holds.  By looking at the minimal tree-pair diagram
representatives of all generators in $X$, we can see that
$\forall g_i\in X\cup X^{-1}$
\[D(g_i)\le\frac{n_1-1}{n_1-1}+2<c\]
From this fact and the proof of Theorem \ref{lowerbd}, we can
see that the maximum number of levels added when multiplying by
a generator or its inverse is $c$, so by our inductive
hypothesis, $\forall g_1,...,g_{n+1}\in X\cup X^{-1}$:
\[D(g_1,...,g_{n+1})  \le  D(g_1,...,g_n)+c\le c(n+1)\]

Now we consider a positive element $w=\Ttree$ in \Fnm.  Since $T_-$
consists of a string of $n_1$--ary carets which are all on the
right side of the tree,
\[L(T_-)=D(T_-)(n_1-1)+1\]
And it is clear that $D(w)=D(T_-)$.  So we have
\[\frac{L(w)-1}{n_1-1}=D(w)\le c|w|_X\]
So for $d=3c(n_1-1)$ we have:
\[|w|_X  \ge  \frac{1}{c}\cdot\frac{L(w)-1}{n_1-1}\ge \frac{2}{d}L(w)\]
since $L(w)-1\ge\frac{2}{3}L(w)$, as all nontrivial $w$ have
$L(w)\ge3$.
\end{proof}

\bibliography{Fnm}
\bibliographystyle{amsplain}

\end{document}